\colorlet{genial}{black} 
\colorlet{genialsol}{black}
\newtheoremstyle{genialnumbox}
{7pt}
{7pt}
{\normalfont}
{}
{\small\bf\sffamily\color{genial}}
{\;}
{0.25em}
{%
{\small\sffamily\color{genial}\thmname{#1}}%
{\nobreakspace\thmnumber{\@ifnotempty{#1}{}\@upn{#2}}}
\thmnote{{\nobreakspace\the\thm@notefont\sffamily\bfseries\color{black}\nobreakspace(#3)}} 
}
\newtheoremstyle{blacknumex}
{7pt}
{7pt}
{\normalfont}
{} 
{\small\bf\sffamily}
{\;}
{0.25em}
{%
{\small\sffamily\color{genial}\thmname{#1}}%
{\nobreakspace\thmnumber{\@ifnotempty{#1}{}\@upn{#2}}}
\thmnote{{\nobreakspace\the\thm@notefont\sffamily\bfseries\color{black}\nobreakspace(#3)}} 
}
\newtheoremstyle{blacknumbox} 
{7pt}
{7pt}
{\normalfont}
{}
{\small\bf\sffamily}
{\;}
{0.25em}
{%
{\small\sffamily\color{genial}\thmname{#1}}%
{\nobreakspace\thmnumber{\@ifnotempty{#1}{}\@upn{#2}}}
\thmnote{{\nobreakspace\the\thm@notefont\sffamily\bfseries\color{black}\nobreakspace(#3)}} 
}
\newtheoremstyle{genialnum}
{7pt}
{7pt}
{\normalfont}
{}
{\small\bf\sffamily\color{genial}}
{\;}
{0.25em}
{%
{\small\sffamily\color{genial}\thmname{#1}}%
{\nobreakspace\thmnumber{\@ifnotempty{#1}{}\@upn{#2}}}
\thmnote{{\nobreakspace\the\thm@notefont\sffamily\bfseries\color{black}\nobreakspace(#3)}} 
}
\newmdenv[skipabove=7pt,
skipbelow=7pt,
rightline=false,
leftline=false,
topline=false,
bottomline=false,
backgroundcolor=black!5,
linecolor=genial,
innerleftmargin=5pt,
innerrightmargin=5pt,
innertopmargin=10pt,
leftmargin=0cm,
rightmargin=0cm,
innerbottommargin=10pt]{tBox}
\newmdenv[skipabove=7pt,
skipbelow=7pt,
rightline=false,
leftline=false,
topline=false,
bottomline=false,
backgroundcolor=genial!10,
linecolor=genial,
innerleftmargin=5pt,
innerrightmargin=5pt,
innertopmargin=5pt,
innerbottommargin=5pt,
leftmargin=0cm,
rightmargin=0cm,
linewidth=4pt]{eBox}	
\newmdenv[skipabove=7pt,
skipbelow=7pt,
rightline=false,
leftline=true,
topline=false,
bottomline=false,
linecolor=genial!50,
innerleftmargin=5pt,
innerrightmargin=5pt,
innertopmargin=5pt,
leftmargin=0cm,
rightmargin=0cm,
linewidth=4pt,
innerbottommargin=5pt]{dBox}	
\newmdenv[skipabove=7pt,
skipbelow=7pt,
rightline=false,
leftline=false,
topline=false,
bottomline=false,
linecolor=gray,
backgroundcolor=black!5,
innerleftmargin=5pt,
innerrightmargin=5pt,
innertopmargin=5pt,
leftmargin=0cm,
rightmargin=0cm,
linewidth=4pt,
innerbottommargin=5pt]{cBox}
\newmdenv[skipabove=7pt,
skipbelow=7pt,
rightline=false,
leftline=false,
topline=false,
bottomline=false,
linecolor=gray,
backgroundcolor=black!5,
innerleftmargin=5pt,
innerrightmargin=5pt,
innertopmargin=5pt,
leftmargin=0cm,
rightmargin=0cm,
linewidth=4pt,
innerbottommargin=5pt]{pBox}
\newmdenv[skipabove=7pt,
skipbelow=7pt,
rightline=false,
leftline=false,
topline=false,
bottomline=false,
linecolor=genialsol,
innerleftmargin=5pt,
innerrightmargin=5pt,
innertopmargin=0pt,
leftmargin=0cm,
rightmargin=0cm,
linewidth=4pt,
innerbottommargin=0pt]{solBox}	
\theoremstyle{genialnumbox}
\newtheorem{thm1}{Theorem}
\newtheorem{ithm1}[thm1]{$\star$ THEOREM}
\newtheorem{ques1}[thm1]{Question}
\newtheorem{conj1}[thm1]{Conjecture}
\theoremstyle{blacknumex}
\newtheorem{exer}[thm1]{Exercise}
\newtheorem{exer*}[thm1]{$\ast$ Exercise}
\theoremstyle{blacknumbox}
\newtheorem{dfn1}[thm1]{Definition}
\theoremstyle{genialnum}
\newtheorem{cor1}[thm1]{Corollary}
\newtheorem{prop1}[thm1]{Proposition}
\newtheorem{lem1}[thm1]{Lemma}
\newtheorem{exm1}[thm1]{Example}
\newenvironment{thm}{\paragraph{ } \begin{tBox}\begin{thm1}}{\end{thm1}\end{tBox}}
\newenvironment{exe*}{\paragraph{ } \begin{eBox}\begin{exer*}}{\hfill{\color{genial}
\ensuremath{\diamond\diamond\diamond}}\end{exer*}\end{eBox}}
\newenvironment{dfn}{\paragraph{ } \begin{dBox}\begin{dfn1}}{\end{dfn1}\end{dBox}}
\newenvironment{ques}{\paragraph{ } \begin{cBox}\begin{ques1}}{\end{ques1}\end{cBox}}	
\newenvironment{conj}{\paragraph{ } \begin{cBox}\begin{conj1}}{\end{conj1}\end{cBox}}	
\newenvironment{prop}{\paragraph{ } \begin{pBox}\begin{prop1}}{\end{prop1}\end{pBox}}	
\newenvironment{lem}{\paragraph{ } \begin{pBox}\begin{lem1}}{\end{lem1}\end{pBox}}
\newenvironment{lem*}[1]{\vspace{1ex}\noindent
{\bf Lemma* (#1).} [restatement]  \hspace{0.5em} \em }{ }
\newenvironment{thm*}[1]{\begin{cBox}
\vspace{1ex}\noindent 
{\bf Theorem* (#1).} [restatement]  \hspace{0.5em} }{\end{cBox}}
\theoremstyle{genialnum}
\newtheorem*{clm*}{Claim}
\newenvironment{sol}%
{\begin{solBox}
\par \noindent 
\scriptsize
{\bf Solution to ex:{\color{blue} \arabic{thm1}}.}  {\color{red} \ \  :( } \\ }%
{\hfill {\color{blue} :) $\checkmark$} \end{solBox}}
\newcommand{\ENDEXER}{
{\expandafter\comment}
{\expandafter\endcomment}
}
\newtheorem{rem}[thm1]{Remark}
\renewcommand{\@seccntformat}[1]{\llap{\textcolor{genial}{\csname the#1\endcsname}\hspace{1em}}}                    
\renewcommand{\section}{\@startsection{section}{1}{\z@}
{-4ex \@plus -1ex \@minus -.4ex}
{1ex \@plus.2ex }
{\normalfont\large\sffamily\bfseries}}
\renewcommand{\subsection}{\@startsection {subsection}{2}{\z@}
{-3ex \@plus -0.1ex \@minus -.4ex}
{0.5ex \@plus.2ex }
{\normalfont\sffamily\bfseries}}
\renewcommand{\subsubsection}{\@startsection {subsubsection}{3}{\z@}
{-2ex \@plus -0.1ex \@minus -.2ex}
{.2ex \@plus.2ex }
{\normalfont\small\sffamily\bfseries}}                        
\renewcommand\paragraph{\@startsection{paragraph}{4}{\z@}
{-2ex \@plus-.2ex \@minus .2ex}
{.1ex}
{\normalfont\small\sffamily\bfseries}}
\newcommand{\IP}[1]{\left\langle #1 \right\rangle}
\newcommand{\set}[1]{\left\{#1\right\}}
\newcommand{\N}{\mathbb{N}}
\newcommand{\R}{\mathbb{R}}
\newcommand{\eps}{\varepsilon}
\newcommand{\ie}{{\em i.e.\ }}
\newcommand{\eg}{{\em e.g.\ }}
\DeclareMathOperator{\E}{\mathbb{E}}     
\renewcommand{\Pr}{}
\let\Pr\relax
\DeclareMathOperator{\Pr}{\mathbb{P}}
\newcommand{\1}[1]{\mathbf{1}_{\set{ #1 } }}
\newcommand{\ind}[1]{\mathbf{1}_{ #1}}
\def\squareforqed{\hbox{\rlap{$\sqcap$}$\sqcup$}}
\def\qed{\ifmmode\squareforqed\else{\unskip\nobreak\hfil
\penalty50\hskip1em\null\nobreak\hfil\squareforqed
\parfillskip=0pt\finalhyphendemerits=0\endgraf}\fi}
\newcommand{\ignore}[1]{ }
\newcommand{\define}[1]{\textbf{#1}}
\newcommand{\HF}{\mathsf{HF}}
\renewcommand{\S}{\mathcal{S}}
\newcommand{\haar}{\mathfrak{m}}
\title{Polynomially growing harmonic functions on connected groups}
\author[Idan Perl]{Idan Perl}
\address{Ben-Gurion University of the Negev, Be'er Sheva ISRAEL}
\email{IP: perli@bgu.ac.il}
\author[Ariel Yadin]{Ariel Yadin}
\email{AY: yadina@bgu.ac.il}
\thanks{Supported by the Israel Science Foundation (grant no.\ 1346/15). \\ 
We thank Yair Glasner and Tom Meyerovitch for helpful discussions.
}
\begin{document}
\maketitle

\begin{abstract}
We study the connection between the dimension of certain spaces of harmonic functions 
on a group and its geometric and algebraic properties.

Our main result shows that (for sufficiently ``nice'' random walk measures) a 
connected, compactly generated, locally compact group has polynomial 
volume growth if and only if the space of linear growth
harmonic functions has finite dimension.

This characterization is interesting in light of the fact that Gromov's theorem 
regarding finitely generated groups of polynomial growth does not 
have an analog in the connected case. That is, there are examples of connected groups of polynomial 
growth that are not nilpotent by compact.
Also, the analogous result for the discrete case has only been established for solvable groups,
and is still open for general finitely generated groups.
\end{abstract}

\section{Introduction}

\subsection{Background}

The study of harmonic functions on abstract groups has been quite fruitful in the past few decades.
Bounded harmonic functions have a deep algebraic structure and have been used to study 
``boundaries'' of groups, especially (but not only) in the discrete case.
This topic was initiated by Furstenberg \cite{F63, F73}.
A search for ``Poisson-Furstenberg boundary'' will reveal an immense amount of literature, 
we refer to \cite{KV83, furman2002RW} and references therein for the interested reader.
As for unbounded harmonic functions, positive harmonic functions were studied in the Abelian case 
by Chouqet \& Deny \cite{CD60} (and further by Raugi  \cite{Raugi} for nilpotent groups).
Yau \cite{Yau} studied  positive harmonic functions on open manifolds of non-negative Ricci curvature.
He conjectured that the space of harmonic functions that grow at most like some polynomial 
on such a manifold should have finite dimension.
This was proved by Colding and Minicozzi \cite{ColMin}.
Kleiner \cite{Kleiner} 
used Colding and Minicozzi's approach for finitely generated groups of polynomial growth,
to reprove Gromov's theorem regarding such groups \cite{Gromov}.

These works bring to light a connection between algebraic properties (nilpotence),
analytic properties (harmonic functions and random walks) 
and geometric properties (volume growth, curvature).
They motivate the following meta-questions:
Given a group $G$, and some space of harmonic functions on $G$,
what can be said about the dimension of the space and its relation to the algebraic and 
geometric properties of the group?  Is the 
dimension independent of the choice of specific random walk?
Does the finiteness of the dimension depend only on the group's algebraic properties? 
In general, one would like to understand
the structure of representations of the group given by its 
canonical action on some specific space of harmonic functions; 
how do these representations vary
as the underlying random walk measure is changed?

An example for a precise formulation of one such question is the following conjecture, 
which has been open for quite some time.

\begin{conj}
Let $G$ be a compactly generated locally compact group.
Let $\mu, \nu$ be two symmetric, adapted probability measures on $G$,
with an exponential tail.  Then, $(G,\mu)$ is Liouville if and only if $(G,\nu)$ is Liouville. 
\end{conj}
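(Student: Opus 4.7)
The plan is to pass to the Kaimanovich--Vershik--Derriennic entropy criterion: for a symmetric, adapted probability measure $\mu$ on a compactly generated locally compact group, $(G,\mu)$ is Liouville if and only if the Avez entropy $h(\mu) = \lim_{n \to \infty} H(\mu^{*n})/n$ vanishes. This reduces the conjecture to showing $h(\mu) = 0 \iff h(\nu) = 0$ for any two symmetric, adapted, exponentially-tailed probability measures on $G$. Under the exponential tail hypothesis one also has $h(\mu), h(\nu) < \infty$, so we are comparing two nonnegative real numbers and trying to show they vanish simultaneously.

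My first attempt would be a coupling / stopping-time argument. Fix a compact symmetric generating set $K$ and let $\ell$ be the associated subadditive length function on $G$. The exponential tail hypothesis yields $\alpha > 0$ with $\int e^{\alpha \ell(g)} d\mu(g), \int e^{\alpha \ell(g)} d\nu(g) < \infty$. The goal is to realize one random walk as a subsequence of the other: for $g$ sampled from $\nu$, produce a random factorization $g = g_1 g_2 \cdots g_\tau$ in which each $g_i$ is, conditional on the past, distributed according to $\mu$. Then the $\nu$-walk is embedded as the $\mu$-walk sampled at a stopping time $T_n = \tau_1 + \cdots + \tau_n$. If one can arrange $\E[\tau \mid g] \leq C \ell(g)$, then exponential tails give $\E[\tau] < \infty$, and a Kingman-type subadditivity argument on $H(\mu^{*T_n})$ yields $h(\mu) \geq c \cdot h(\nu)$ for some $c > 0$. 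Symmetry in $\mu, \nu$ closes the equivalence.

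The main obstacle is producing the coupled factorization at the level of trajectories rather than one-step distributions: one must simultaneously realize the product as $\nu$-distributed \emph{and} each factor as conditionally $\mu$-distributed, inside a single probability space. This is the heart of the Kaimanovich--Vershik problem and is precisely what has blocked progress for decades. A potentially tractable special case is when $\mu, \nu$ are both absolutely continuous with respect to Haar measure $\haar$ with densities bounded above and below on a common compact set. There one can find integers $k, l$ and a constant $c > 0$ with $\mu^{*k} \geq c\,\nu$ and $\nu^{*l} \geq c\,\mu$ as measures, and then either a direct Varopoulos-type convolution-comparison or a Derriennic boundary-identification argument can transfer the Liouville property. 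Extending such an argument to the full generality of the conjecture --- in particular to the case where the supports of $\mu$ and $\nu$ are not mutually ``visible'' after a bounded number of convolutions --- would require a genuinely new idea for constructing boundary-preserving couplings between random walks whose step distributions are only comparable at the level of tails.
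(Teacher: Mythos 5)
This statement is labelled a Conjecture in the paper, and the paper offers no proof of it --- indeed the authors explicitly describe it as a question that ``has been open for a while'' and use it only as motivation for Conjecture \ref{conj:main}. So there is no proof in the paper to compare against, and your write-up, to its credit, does not actually claim to close the problem: you reduce to showing that the Avez entropies of $\mu$ and $\nu$ vanish simultaneously, propose a coupling in which a $\nu$-step is factored as a product of a random number of conditionally $\mu$-distributed steps, and then candidly identify that constructing such a boundary-preserving factorization at the level of trajectories is exactly the unsolved core of the Kaimanovich--Vershik problem. That identification is correct, and it is the genuine gap: nothing in the exponential-tail hypothesis gives you a mechanism for realizing $g \sim \nu$ as $g_1\cdots g_\tau$ with each $g_i$ conditionally $\mu$-distributed; the supports of $\mu$ and $\nu$ need not be comparable after any number of convolutions, and without such a coupling the inequality $h(\mu) \geq c\, h(\nu)$ has no starting point. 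The special case you mention (densities mutually bounded after finitely many convolutions) is indeed tractable by standard comparison arguments, but it does not approach the general statement.

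Two further cautions on the reduction itself. First, the entropy criterion in the form $h(\mu)=0 \iff$ Liouville is cleanly established for discrete groups with $H(\mu)<\infty$; on a general non-discrete CGLC group the Shannon entropy $H(\mu^{*n})$ requires a reference measure and additional regularity of $\mu$ (absolute continuity with respect to Haar measure, or a smoothing step), so even the first move needs justification in the stated generality. Second, even granting a stopping-time embedding $T_n$ with $\E[\tau]<\infty$, the inequality you want is $h(\nu) \leq \E[\tau]\cdot h(\mu)$ (entropy of a walk sampled along a stopping time is controlled by the entropy of the ambient walk), which goes in the right direction but still rests entirely on the coupling you cannot construct. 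In short: the approach is a reasonable map of the difficulty, but it is not a proof, and the paper does not contain one either.
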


Here $(G,\mu)$ is Liouville means that any bounded $\mu$-harmonic function is constant.
It is well known 
that the space of bounded harmonic functions is either only the constant functions (\ie Liouville)
or has infinite dimension.
(For finitely generated groups this is 
also an easy consequence of Theorem \ref{thm: convergence is infinite dim} 
or Theorem \ref{thm: convRW} below.)
So an equivalent formulation of the above conjecture is that the dimension of the space of bounded harmonic functions does not depend on the specific choice of (nicely behaved) measure $\mu$.

As stated, this question regarding bounded harmonic functions has been open for a while.
This is part of the motivation for the following conjecture, 
from  \cite{MY16}.

\begin{conj} \label{conj:main}
Let $G$ be a compactly generated locally compact group.
Let $\mu$ be a symmetric, adapted probability measure on $G$,
with an exponential tail.  Then, $G$ has polynomial growth if and only if
the space of linearly growing $\mu$-harmonic functions on $G$ is finite dimensional.
\end{conj}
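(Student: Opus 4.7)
The conjecture packages together two implications that I would attack by entirely different methods. Let me address them in turn.

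For the easy direction, \emph{polynomial growth implies finite dimensional space of linear growth (LG) harmonic functions}, I would adapt the Colding--Minicozzi / Kleiner mean value approach. By Guivarc'h / Losert, polynomial growth of $G$ is equivalent to volume doubling of balls in a word metric. Combined with the exponential tail hypothesis on $\mu$, this gives uniform on-diagonal heat kernel decay and a Poincar\'e inequality at every scale, which in turn yield a uniform gradient estimate for any $\mu$-harmonic function of linear growth. A standard covering argument (counting how many linearly independent LG harmonic functions can be distinguished by their ``gradients'' at a ball of radius $R$, and letting $R\to\infty$) then bounds the dimension.

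For the hard direction, I would argue contrapositively: assuming $G$ has super-polynomial growth I would exhibit an infinite dimensional family of LG harmonic functions. Since $G$ is connected and locally compact, by Yamabe's theorem it is a projective limit of connected Lie groups $G/K_\alpha$; pullback of a LG harmonic function along $G\to G/K_\alpha$ is a LG harmonic function on $G$, so I would first reduce to the case that $G$ is a connected Lie group. By the Guivarc'h--Jenkins dichotomy a connected Lie group has either polynomial or exponential growth, so I may assume exponential growth, which by Jenkins' criterion means that the adjoint action of some element on the radical has an eigenvalue off the unit circle. This gives a closed non-type-R subgroup $H\le G$, which up to isogeny contains an $ax+b$-type factor.

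The next step is to produce the LG harmonic functions from this structure. The $ax+b$-type factor has a rich supply of continuous real characters, and more generally the solvable exponential-growth Lie groups admit Poisson-type integral representations of LG harmonic functions parametrized by boundary data, as in the Choquet--Deny / Raugi picture. These give infinite dimensional families on $H$. To push them up to $G$ I would use a version of the classical induction / Poisson integration: either lift $H$-harmonic functions via a $G$-equivariant kernel constructed from $\mu$ restricted to cosets of $H$, or exploit the $G$-action on the space of LG harmonic functions (a linear $G$-representation) together with the pullback from an appropriate quotient where $H$ surjects.

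The main obstacle is the last step. It is not formal that LG harmonic functions on a subgroup inflate to LG harmonic functions on $G$ of linear (not polynomial) growth in the \emph{ambient} word metric, and one must be careful that the inflated functions remain linearly independent after the lifting procedure. I would handle this by combining a Harnack-type comparison between the word metrics on $H$ and $G$ (using compact generation and the exponential tail) with a Shalom-style argument exploiting the fact that a finite dimensional $G$-invariant space of LG harmonic functions must, after quotienting constants, carry a non-trivial homomorphism $G\to \R$, thereby opening an induction on dimension. I would expect Theorem \ref{thm: convergence is infinite dim} and Theorem \ref{thm: convRW} to be exactly the technical tools that convert ``super-polynomially many directions of escape'' into genuine infinite dimensionality of the harmonic function space.
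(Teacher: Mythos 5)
Your easy direction and the outer reduction (Yamabe--Gleason down to a connected Lie group, the Jenkins polynomial/exponential dichotomy, and locating an affine $ax+b$-type piece) match the paper's route --- note, though, that both you and the paper thereby treat only the \emph{connected} case: the conjecture as stated for general CGLC groups remains open, and the paper records its result as Theorem \ref{thm:characterization}. The genuine gap is in how you propose to produce the linear-growth harmonic functions on the affine piece. Continuous real characters and the Choquet--Deny/Raugi picture give positive harmonic functions of the form $e^{\chi}$ for a homomorphism $\chi:G\to\R$; these have \emph{exponential} growth, not linear, so they never land in $\HF_1$. The paper's Section 2 exists precisely to get around this: one takes a $\mu$-stationary Radon measure $\nu$ on $\R^d$ (Babillot--Bougerol--Elie), sets $h(g)=\int\phi(g.x)\,d\nu(x)$ for a compactly supported bump $\phi$, and then must prove the volume estimate $\nu([-z,z]^d)\le C(1+\log z)$, which is done by an optional-stopping argument on the martingale $\nu(\Phi_t^{-1}.V)$ combined with a hitting-time lower bound of Brofferio--Buraczewski, extending their $d=1$ result to all $d$. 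Without that estimate you have not produced a single non-constant element of $\HF_1$, so your construction step fails as written.

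Second, your mechanism for passing from one non-constant function to infinite dimension is off target. The paper does not run a Shalom-style induction on a homomorphism $G\to\R$; it exploits \emph{positivity}: a positive harmonic function evaluated along the random walk is a positive martingale, hence converges a.s., and Theorem \ref{thm: convRW} says that a non-constant continuous function of sub-exponential growth which converges along random walks on an amenable group must have an infinite-dimensional orbit. Thus the single positive linear-growth function forces $\dim\HF_1=\infty$. You name the right theorems at the end, but the property that makes them applicable --- positivity of the constructed function --- is the load-bearing point, and your unspecified ``Poisson-type'' families are not shown to have it. Finally, the lifting obstacle you flag is real for your route but is avoided in the paper: the reduction only ever passes to finite-index subgroups, where restriction via the hitting measure gives an isomorphism $\HF_k(G,\mu)\cong\HF_k(H,\mu_H)$, and to continuous quotients, where pullback works; it never needs to inflate harmonic functions from a general closed subgroup.
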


Note that a group $G$ with measure $\mu$ may be Liouville but still have an infinite dimension
of linearly growing harmonic functions (see e.g.\ \cite{KV83, MY16} and below for examples).

In \cite{MY16} this conjecture is verified for $G$ finitely generated and (virtually) solvable.
In fact, it is known that for finitely generated $G$, 
the dimension of the space of linear growth harmonic functions 
is either infinite or some number independent of the choice of specific measure,
see \cite{MPTY}.

The main result of this paper is a proof of Conjecture \ref{conj:main} for {\em connected}
topological groups.  In order to precisely state the results we introduce some notation.

\subsection{Notation and main results}

Let $G$ be a compactly generated locally compact (CGLC) group, 
and fix $K$ a compact generating set. 
Assume it is symmetric (\ie $K=K^{-1} = \{ x^{-1} :  x \in K \}$). 
Let $K^n = \{ x_1 x_2 \cdots x_n \ : \ x_1,\ldots,x_n \in K \}$.
$K$ induces a metric on $G$ by
$$
d_K(x,y)=d_K(1,x^{-1}y):=\min \{n:\ x^{-1}y\in K^n\}
$$ 
and we use the notation $|x| = |x|_K = d_K(1,x)$. Note that this metric is left invariant, 
that is $d_K(x,y) = d_K(zx,zy)$,
and that for two choices of generating sets $K_1$ and $K_2$, 
the respective metrics are bi-Lipschitz, \ie there exists a constant $c = c(K_1,K_2)>0$ such that 
$
c^{-1} |x|_{K_1} \leq |x|_{K_2}\leq c|x|_{K_1}
$ 
for all $x\in G$. 

\subsubsection{Growth of a group}
The \textit{growth of} $G$ is the growth rate of the sequence $(\haar(K^n) )_n$, 
where $\haar = \haar_K$ is the Haar measure on $G$ normalized to $\haar (K) = 1$.
We are mainly interested in polynomial growth: $G$ is said to have \define{polynomial growth}
if there exist constants $C>0 , k>0$ such that for all $n \geq 1$ we have $\haar(K^n) \leq C n^k$.
$G$ is said to have \define{exponential growth} if there exists some $t>1$ and $c>0$ 
such that $\haar(K^n) \geq c t^n$ for all $n \geq 1$.  (When $G$ is a {\em connected} CGLC group,
the growth is always either polynomial of exponential, see \cite{jenkins}.)
Because of the bi-Lipschitz property of the different possible metrics, the growth of $G$ does not 
depend on the specific choice of $K$.

\subsubsection{Growth of functions}
For functions $f:G\to\R$, define the following (perhaps infinite) quantity: 
\begin{align*}
||f||_k=\limsup_{r\to\infty} r^{-k} \sup_{|x|\leq r	}|f(x)|.
\end{align*}
We say that $f:G\to\R$ has \textit{degree-$k$ polynomial growth} if $||f||_k<\infty$. In the case $k=1$ we say that $f$ has linear growth. Note that $||f||_k<\infty$ is equivalent to the existence of a constant $c>0$ such that $|f(x)|\leq c (1+|x|)^k$ for all $x\in G$. The group $G$ acts naturally on $\R^G$ by $(\gamma.f)(x)=f(\gamma^{-1}x)$. 
By bi-Lipschitzness, the property $|| f ||_k < \infty$ is independent of the choice of specific generating set 
(although the specific value of $|| f ||_k$ does depend on the metric induced by $K$). 

The reader should beware to not confuse the growth of the group, and the growth of a function on the group,
which are two different notions.

\subsubsection{Laplacian and harmonic functions}
Throughout, we will consider a probability measure $\mu$ on $G$. We will always assume that
\begin{itemize}
\item It is {\em adapted}, i.e.\ there is no proper closed subgroup $H\lneq G$ such that $\mu(H)=1$.
\item It is {\em symmetric}, i.e.\ $\mu(A)=\mu(A^{-1})$ for any measurable set $A$.
\item It has a third moment, i.e.\ $\int_G |s|^3 d\mu(s)<\infty$.
\end{itemize}
For short, we call a probability satisfying these three assumption \textit{courteous}. 
If $\mu$ satisfies $\int_G e^{\eps |s|} d \mu(s) < \infty$ for some $\eps>0$, 
we say that {\em $\mu$ has an exponential tail}.
Note that the property of having a third moment or of having an exponential tail is 
independent of specific choice of generating set, again because the different metrics are bi-Lipschitz.

For measurable functions $f:G\to\R$, we define the Laplace operator by
\begin{align*}
(\Delta_{\mu}f)(x)=f(x)-\int_G f(xs)d\mu(s),
\end{align*} 
and we say that a function $f:G\to\R$ is $\mu$-\textit{harmonic} if $\Delta_{\mu}f \equiv 0$.

We can now define the space of $\mu$-harmonic functions with polynomial growth of degree at most $k$:
\begin{align*}
\HF_k(G,\mu):=\{f:G\to\R \ | \ \Delta_{\mu}f \equiv 0,\ ||f||_k<\infty \ , \ f \textrm{ is continous } \}.
\end{align*}
Note that since $G$ acts on the left and harmonicity is checked on the right, $\HF_k(G,\mu)$ is a $G$-invariant subspace of $\R^G$.

\subsection{Main result: characterization of polynomial growth}

\label{scn: characterization}

As mentioned, Gromov's theorem \cite{Gromov} characterizes the geometric property of polynomial
growth of a finitely generated group by the 
algebraic property of containing a finite index nilpotent subgroup.
However, in the connected case, there is no such characterization.
In fact, it is not true that any CGLC group of polynomial growth is nilpotent by compact.
One can construct a connected $2$-step solvable linear group of polynomial growth that is not nilpotent by compact, see \cite[Example 7.9]{Bre07}.
Is is known that connected CGLC groups have either polynomial or exponential growths.
In fact, Jenkins \cite{jenkins} proves some equivalent conditions to polynomial growth,
one of which is not containing a free uniformly-discrete semigroup.  See \cite{jenkins} for details.

Our main result is the following theorem characterizing connected CGLC groups of polynomial growth
using an analytic property, namely the finiteness of the dimension of $\HF_1$.

\begin{thm} \label{thm:characterization}
Let $G$ be a connected CGLC group.  
Let $\mu$ be a courteous measure with exponential tail.
The following are equivalent:
\begin{enumerate}
\item $G$ has polynomial growth.
\item For any $k \geq 1$
we have $\dim \HF_k(G,\mu) < \infty$.
\item $\dim \HF_1(G,\mu) < \infty$.
\item The space $\HF_1(G,\mu)$ does not contain a non-constant positive function.
\end{enumerate}
\end{thm}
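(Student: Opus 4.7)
The plan is to establish the cycle $(1) \Rightarrow (2) \Rightarrow (3) \Rightarrow (4) \Rightarrow (1)$. The implication $(2) \Rightarrow (3)$ is trivial, and the remaining three directions each require a different ingredient: a dimension-counting argument, a representation-theoretic argument, and a boundary-type construction, respectively.

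For $(1) \Rightarrow (2)$ I would adapt the Colding--Minicozzi--Kleiner approach to the CGLC setting. Polynomial growth of $G$ forces volume doubling for the Haar measure on $K^n$-balls; together with a Poincar\'e-type inequality for $\mu$-harmonic functions, derived using the symmetry, adaptedness, and third-moment assumption on $\mu$ (which control the convolution $\mu * f - f$ at scales much smaller than $|x|$), one runs the standard linear-algebra argument: too many linearly independent elements of $\HF_k$ would, after being normalized and averaged on balls tuned to the degree of growth, violate doubling. The technical content lies in passing from word-balls to Haar-ball estimates and in bounding $\Delta_\mu f$ for $f$ of polynomial growth using the moment assumption.

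For $(3) \Rightarrow (4)$, suppose $\HF_1$ is finite dimensional and contains a non-constant positive $h$. Then $V := \mathrm{span}_\R(G \cdot h) \subseteq \HF_1$ is a finite-dimensional continuous $G$-representation. The positivity of $h$ and of its $G$-translates gives a closed $G$-invariant convex cone in $V$; since $G$ is connected, its projective action on this cone has a fixed ray, yielding a non-zero positive $\phi \in V$ with $\gamma.\phi = \chi(\gamma)\phi$ for a continuous character $\chi$ of $G$. Because $\phi$ has linear growth and is harmonic, one shows $\chi \equiv 1$, so $\phi$ is constant; chasing this back through the structure of $V$ and invoking a Choquet--Deny type argument on the complement should force $h$ itself to be constant, giving the desired contradiction.

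The hard direction is $(4) \Rightarrow (1)$, which I would prove by contrapositive: if $G$ does not have polynomial growth, then by Jenkins' theorem \cite{jenkins} it has exponential growth, and in fact contains a free uniformly discrete sub-semigroup. The plan is to use this to construct a non-constant positive $\mu$-harmonic function of linear growth. I would start from a horofunction / Busemann construction: choose a sequence $g_n$ escaping along the free semigroup direction, consider the $1$-Lipschitz functions $b_n(x) = d_K(x,g_n) - d_K(1,g_n)$, and extract by Arzel\`a--Ascoli a subsequential limit $b$; $b$ is $1$-Lipschitz, non-constant, and bounded below on $G$ after adding a constant. To turn $b$ into a $\mu$-harmonic function without losing its non-triviality, I would sweep it by the random walk via Ces\`aro averages $H_N = \tfrac{1}{N}\sum_{k=0}^{N-1} \mu^{*k} * b$: the exponential tail of $\mu$ and Lipschitz continuity of $b$ keep $\|H_N\|_1$ uniformly bounded, and a diagonal compactness argument yields a subsequential pointwise limit that is $\mu$-harmonic of linear growth. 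The main obstacle will be ensuring that this limit remains non-constant and positive — the point where Jenkins' free semigroup is genuinely used, to guarantee that $b$ has a robust asymptotic slope in the escape direction that survives the averaging, and that symmetry of $\mu$ does not collapse it under the sweep.
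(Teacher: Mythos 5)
Your overall architecture (the cycle $(1)\Rightarrow(2)\Rightarrow(3)\Rightarrow(4)\Rightarrow(1)$) matches the paper's, and your sketch of $(1)\Rightarrow(2)$ is essentially the Colding--Minicozzi--Kleiner route the paper delegates to \cite{Perl18} (note, though, that a third moment is not enough for general $k$ --- the exponential tail is needed just to make $\Delta_\mu f$ well defined on degree-$k$ functions). The two remaining implications, however, each have a genuine gap. For $(3)\Rightarrow(4)$: your claim that connectedness alone forces the projective action on the invariant cone to have a fixed ray is false (a connected group can permute the extreme rays of an invariant proper cone transitively; the fixed-point property you want needs amenability, and even then the projective action on a cone base is not affine, so the standard amenability fixed-point theorem does not apply directly). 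You also never address why $G$ may be assumed amenable (the paper's implicit answer: otherwise $\HF_0$ is already infinite dimensional). Most importantly, the final step --- ``chasing this back through the structure of $V$ \ldots should force $h$ itself to be constant'' --- is precisely where all the work lives: the paper does this via Guivarc'h's theorem (a flag of type-$\mathbf{S}$ subrepresentations for amenable linear groups), martingale convergence of $h(X_t)$, and a sub-exponential-growth argument killing the diagonal characters; none of this is replaced by your Choquet--Deny handwave.

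For $(4)\Rightarrow(1)$ the gap is fatal as stated. The Busemann-plus-Ces\`aro construction is essentially the Amir--Kozma argument \cite{AK17}, and the paper explicitly warns that the function it produces ``may have exponential growth, so does not necessarily belong to $\HF_k$.'' Concretely: the metric $d_K$ is \emph{left}-invariant while harmonicity is tested by \emph{right} convolution, so $|b(xX_k)-b(X_k)|\le d(xX_k,X_k)=|X_k^{-1}x^{-1}X_k|$ is a conjugate length that can grow linearly in $k$; your averages $H_N$ then satisfy only $|H_N(x)-H_N(1)|\le |x|+CN$, and no uniform linear bound survives the limit $N\to\infty$. Since the whole point of condition (4) is that the positive function lies in $\HF_1$, losing control of the growth loses the theorem. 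The paper avoids this entirely by a structural reduction (Yamabe--Gleason, Jenkins, Bougerol--\'Elie) to a closed subgroup of the affine group $\S_d$, where the harmonic function is built from a $\mu$-stationary Radon measure $\nu$ on $\R^d$ and its linear growth is equivalent to the logarithmic volume bound $\nu([-z,z]^d)\le C(1+\log z)$, proved via an optional-stopping argument and the hitting-time estimates of Brofferio--Buraczewski. Non-constancy is also not free in your construction, whereas in the paper it comes from the non-$G$-invariance of $\nu$.
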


This is a solution of Conjecture \ref{conj:main} for the connected case.

Here is a sketch of the main steps of the argument.

The {\bf first step} uses the result of \cite{BBE97} stating  
that if $G$ is a closed subgroup of the $d$-dimensional 
affine group, $\S_d$, and if $G$ does not have polynomial growth, then there exists a non-constant positive continuous harmonic function $h$ on $G$. 

The {\bf second step} is to show that the above positive 
function has linear growth. 
This was shown in \cite{BB15} for the case $d=1$, and we extend their result to general $d \geq 1$. 

These two steps culminate in:

\begin{lem}
\label{lem: linear growth}
Let $G$ be a closed subgroup of $\S_d$ 
and let $\mu$ be a courteous measure on $G$. 
Suppose $G$ does not have polynomial growth, 
Then, there exists a continuous, non-constant positive 
$\mu$-harmonic function on $G$, which admits linear growth. 
\ie $\HF_1(G,\mu)$ contains a non-constant positive function.
\end{lem}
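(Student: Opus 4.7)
The plan is to follow the two-step strategy sketched in the paper. First, since $G$ is a closed subgroup of $\S_d$ that does not have polynomial growth and $\mu$ is courteous with exponential tail, the result of \cite{BBE97} directly furnishes a non-constant, continuous, positive $\mu$-harmonic function $h_0$ on $G$. The nontrivial task is therefore to produce such a function with the additional property $\|h\|_1 < \infty$ (linear growth).

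For this second step, I would extend the argument of \cite{BB15} from $d = 1$ to general $d$. The strategy is to parametrize a family of minimal positive $\mu$-harmonic functions $\{h_s\}_{s \in \Omega}$ on $G$ by a parameter $s$ in a convex domain $\Omega$ of characters of $G$ (coming ultimately from the dilation homomorphism $G \to \R^*_+$ combined with the $\R^d$-translation part), and then to differentiate in $s$. The key steps are: (i) use the exponential-tail hypothesis on $\mu$ to ensure real-analyticity of $s \mapsto h_s(g)$ on a neighborhood of a distinguished parameter $s_0 \in \Omega$, uniformly on compact subsets of $G$; (ii) identify, using the symmetry of $\mu$, a point $s_0 \in \Omega$ at which $h_{s_0}$ is constant; (iii) set $\tilde h := \partial_s h_s \big|_{s = s_0}$, a (signed) $\mu$-harmonic function whose values at $g \in G$ are governed by the character evaluated at $g$, hence of linear growth by bi-Lipschitz equivalence of left-invariant metrics on $G$; and (iv) add a sufficiently large constant (which is harmonic) to $\tilde h$ to restore positivity while preserving linear growth.

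The main obstacle is step (iii) in dimension $d > 1$: in $d = 1$, \cite{BB15} exploits the explicit integral representation of positive harmonic functions via the stationary measure of the affine stochastic recursion on $\R$, and uses the total order on $\R$. In higher dimension the analogous stationary measure lives on $\R^d$, and one needs uniform-in-$g$ control of the derivative $\partial_s h_s(g)$, combining the third-moment and exponential-tail hypotheses on $\mu$ with a multidimensional version of the regularity estimates of \cite{BB15}. A secondary subtlety is ensuring that differentiation actually produces a non-constant harmonic function; this is where the assumption that $G$ does not have polynomial growth re-enters, since (together with \cite{BBE97}) it guarantees a nontrivial domain $\Omega$ of admissible characters, and hence at least one direction of differentiation in which $\tilde h$ is non-constant.
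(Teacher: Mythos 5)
Your proposal diverges from the paper's actual argument, and the divergence introduces a fatal gap at step (iv). The function $\tilde h = \partial_s h_s|_{s=s_0}$ obtained by differentiating a family of characters at a point where the character is trivial is, to leading order, a continuous homomorphism $G \to (\R,+)$ (plus harmonic corrections); such a function, when non-constant, is unbounded \emph{below} as well as above. No additive constant can turn a linear-growth function that is unbounded below into a positive function, so step (iv) cannot restore positivity. Positivity is not a cosmetic requirement here: it is exactly what makes $h(X_t)$ a positive martingale, which is how the lemma feeds into Theorem \ref{thm: convergence is infinite dim} and the main characterization. A secondary problem is that your argument leans on an exponential-tail hypothesis (for real-analyticity in $s$) that the lemma does not assume; the paper is explicit that this lemma needs only the third-moment (courteous) hypothesis.

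For comparison, the paper's route keeps positivity built in from the start. By Proposition 1.1 of \cite{BBE97} there is a $\mu$-stationary Radon measure $\nu$ on $\R^d$, and one sets $h(g)=\int_{\R^d}\phi(g.x)\,d\nu(x)$ for a fixed nonnegative bump $\phi$ with $\ind{(1/2)B}\leq\phi\leq\ind{B}$, $B=[-1,1]^d$; this $h$ is automatically continuous, nonnegative, $\mu$-harmonic, and non-constant when $G$ lacks polynomial growth (via Lemma 2.10 of \cite{BE95}, choosing $\nu$ not $G$-invariant). Linear growth then reduces to the volume estimate $\nu([-z,z]^d)\leq C(1+\log z)$, since $g.B\subset[-M^{|g|},M^{|g|}]^d$. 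That estimate is the only place \cite{BB15} enters, and not in the way you describe: one runs an optional-stopping argument on the positive martingale $M_t=\nu(\Phi_t^{-1}.V)$ to get $\nu(V)\geq\Pr[T_{U,V}<\infty]\cdot\nu(U)$, and then dominates the $d$-dimensional walk by a one-dimensional affine walk $R_t$ built from $g_\psi=(a,\max\{\|b\|,1\})\in\S_1$, so that the hitting-probability lower bound $\Pr[T_{V_z}<\infty]>\delta/(1+\log z)$ from Lemma 3.4(2) of \cite{BB15} applies. If you want to salvage your write-up, you should replace the differentiation scheme with this stationary-measure construction, or at minimum explain how your $\tilde h$ could be bounded below, which it generically is not.
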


The proof of this lemma is in Section \ref{scn:linear growth HFs}.

The {\bf third step} is a reduction from general connected CGLC groups to the case of $\S_d$.
This step utilizes heavy machinery such as the solution to Hilbert's fifth problem,
which enables understanding of the structure of connected CGLC groups.
Ultimately, this third step proves the following lemma.
The proof is given in Section \ref{scn:from LC to Sd}.

\begin{lem}
\label{lem: exists HF}
Let $G$ be a connected CGLC group and let $\mu$ be a courteous 
measure on $G$.  
Then, there exists a connected closed subgroup $G'$ 
of the $d$-dimensional affine group $\S_d$, and a courteous measure $\mu'$ on $G'$ such that:
\begin{itemize}
\item $\dim \HF_1(G',\mu') \leq \dim \HF_1(G,\mu)$.
\item If $G'$ has polynomial growth then also $G$ has polynomial growth.
\item If there exists a non-constant positive function in $\HF_1(G',\mu')$ then there also
exists a non-constant positive function in $\HF_1(G,\mu)$.
\end{itemize}
\end{lem}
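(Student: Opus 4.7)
The plan is to construct a continuous surjective homomorphism $\pi : G \twoheadrightarrow G'$ onto a closed connected subgroup $G' \leq \S_d$ (for some $d$ depending on $G$) and take $\mu' = \pi_* \mu$. Before building $\pi$, I would verify that any such homomorphism automatically yields the three conclusions. Taking $K' = \pi(K)$ as a compact symmetric generating set of $G'$ gives $|\pi(x)|_{K'} \leq |x|_K$, so the map $f \mapsto f \circ \pi$ sends $\HF_1(G', \mu')$ linearly into $\HF_1(G, \mu)$, is injective by surjectivity of $\pi$, and clearly preserves positivity and non-constancy. The push-forward $\mu'$ is courteous: adaptedness follows since $\pi^{-1}$ of a proper closed subgroup is proper closed, symmetry passes through, and the third moment is inherited because $|\pi(s)| \leq |s|$. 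Finally, the construction below will always produce a $G'$ with the same growth type as $G$, giving the second bullet.

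The first reduction is via the solution to Hilbert's fifth problem (Montgomery--Zippin--Yamabe): every connected CGLC group $G$ has a compact normal subgroup $K$ with $G/K$ a connected Lie group, and quotienting by $K$ preserves growth type and embeds $\HF_1(G/K)$ into $\HF_1(G)$ (harmonic functions on $G/K$ pull back and measures push forward). So I may assume $G$ itself is a connected Lie group. If $G$ has polynomial growth, take $G' = \{e\}$ and all three bullets are trivial. Otherwise $G$ has exponential growth, which by Guivarch's theorem (cf.\ Jenkins \cite{jenkins}) is equivalent to $G$ not being of type R: there exists $X \in \mathfrak{g}$ such that $\operatorname{ad}(X)$ has some eigenvalue with nonzero real part.

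The heart of the argument, and the main obstacle, is turning this infinitesimal datum into an honest continuous surjective homomorphism of $G$ onto a closed connected subgroup of $\S_d$ of exponential growth. My plan is to use the Levi decomposition $\mathfrak{g} = \mathfrak{r} \rtimes \mathfrak{s}$. If the semisimple part $\mathfrak{s}$ contains a noncompact simple summand, then $G$ surjects onto a simple noncompact Lie group whose Iwasawa $AN$-factor contains a copy of $\S_1 \leq \S_d$ as a closed subgroup; composing with the projection onto this $AN$-Borel (after modding out the compact $K$-factor) produces the required homomorphism. Otherwise all exponential growth is carried by the solvable radical $R = \exp(\mathfrak{r})$; using the structure theory of connected solvable Lie algebras that fail to be of type R, I would identify an abelian ideal $\mathfrak{a} \triangleleft \mathfrak{r}$ and an element $Y \in \mathfrak{r}$ such that $\operatorname{ad}(Y)$ acts on $\mathfrak{a}$ with a (possibly complex) eigenvalue of nonzero real part, and then quotient $\mathfrak{r}$ by the ideal killing the kernel of this action together with the other eigenspaces to obtain a Lie algebra isomorphic to $\operatorname{Lie}(\S_d)$ for suitable $d$. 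Integrating (using Mal'cev-type theorems for solvable Lie groups) then yields the desired quotient $G \to G' \leq \S_d$ of exponential growth. The hardest step will be ensuring that the Lie-algebra ideal I want to quotient by is actually the Lie algebra of a closed normal subgroup of $G$, which may require passing to the universal cover and carefully tracking the kernel of the covering map so that the resulting image sits as a closed, and not merely immersed, subgroup of $\S_d$.
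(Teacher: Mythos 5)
Your framework for transferring the conclusions along a continuous surjective homomorphism $\pi:G\to G'\leq \S_d$ (pullback of harmonic functions is injective and preserves positivity, growth, and harmonicity; the pushforward measure is courteous) is correct and matches what the paper does, as is the first reduction via Yamabe--Gleason to a connected Lie group and the polynomial-growth case. The genuine gap is in the semisimple branch of your case analysis. If the Levi factor has a noncompact simple summand, there is in general \emph{no} continuous surjective homomorphism of $G$ onto a nontrivial connected closed subgroup of $\S_d$: in the Iwasawa decomposition $KAN$ the compact factor $K$ is not normal, so ``modding out the $K$-factor'' is not an operation on groups, and a connected simple noncompact group such as $\mathrm{SL}_2(\R)$ has no proper nontrivial connected closed normal subgroups at all, so its only continuous images are (essentially) itself --- which, being non-amenable, cannot embed as a closed subgroup of the amenable group $\S_d$. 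The paper avoids this dead end by splitting on amenability rather than on the Levi decomposition: if the linear image of $G$ is non-amenable, it already admits non-constant \emph{bounded} harmonic functions (\cite[Theorem 1.1]{BE95}), hence $\HF_1(G,\mu)$ contains a non-constant positive function and is infinite dimensional, and the lemma's three bullets are then satisfied by taking any exponential-growth $G'\leq\S_1$; no map from $G$ to $\S_d$ is needed in that case.

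In the remaining (amenable, exponential growth) case your sketch of the solvable analysis is plausible in outline but is precisely the hard content that the paper outsources to Bougerol--\'Elie: first \cite[Lemma 3.10]{BE95} to get a linear representation whose image retains exponential growth, then \cite[Proposition 3.9]{BE95} to produce (after passing to a finite-index subgroup, which is handled by the hitting-measure isomorphism $\HF_1(G,\mu)\cong\HF_1(H,\mu_H)$ of Proposition \ref{prop: courteous motivation}) a homomorphism onto an exponential-growth subgroup of $\S_d$. The issues you flag yourself --- integrating the Lie-algebra ideal to a closed normal subgroup, ensuring the image is closed rather than merely immersed, and (one you do not flag) ensuring the ideal of $\mathfrak{r}$ you quotient by is invariant under all of $\mathfrak{g}$ and not just under $\mathfrak{r}$ --- are exactly the content of those citations, so as written the proposal asserts rather than proves the key structural step.
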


For the {\bf fourth and last step}, we want to show that $\HF_1$ has infinite dimension
as soon as it admits some non-constant positive function. 
This is the content of the following theorem, which may be of independent interest.

\begin{thm}
\label{thm: convergence is infinite dim}
Let $G$ be an amenable CGLC group. Let $\mu$ be a courteous measure on $G$. 

If $\dim \HF_1(G,\mu) < \infty$, 
then any $h \in \HF_1(G,\mu)$ which is positive must be constant.
\end{thm}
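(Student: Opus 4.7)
The plan is to argue by contradiction. Assume $V := \HF_1(G,\mu)$ is finite-dimensional and that $h \in V$ is positive but non-constant. I will use amenable averaging of the left $G$-orbit of $h$ to produce a continuous additive homomorphism $\phi:G\to\R$, and then rule out the existence of a nonzero such $\phi$ by exploiting positivity.

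First note that left translation preserves the seminorm $\|\cdot\|_1$ on $V$ (since $d_K$ is left-invariant and the $\limsup$ in the definition of $\|\cdot\|_1$ is insensitive to a bounded shift of the argument). Let $V_0 := \{f\in V:\|f\|_1=0\}$ be its kernel, a $G$-invariant subspace containing the constants $\R$. On the finite-dimensional normed quotient $V/V_0$, the $G$-action is by linear isometries and hence lands in a compact subgroup $K\subseteq\mathrm{GL}(V/V_0)$. Using amenability, pick a Følner sequence $(F_n)$ in $G$ with respect to the Haar measure $\haar$ and form the averages
\[
h_n(x)\,:=\,\frac{1}{\haar(F_n)}\int_{F_n}h(g^{-1}x)\,d\haar(g).
\]
Each $h_n$ is nonnegative, continuous, right-$\mu$-harmonic, and of linear growth with $\|h_n\|_1\leq\|h\|_1$, so $h_n \in V^+ := V\cap\{f\geq 0\}$. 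The classes $\bar h_n \in V/V_0$ are bounded and almost $G$-invariant by the Følner property, so a subsequence converges to a $G$-fixed class $\bar h_\infty \in V/V_0$.

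A lift $h_\infty \in V$ of $\bar h_\infty$ then satisfies $L_g h_\infty - h_\infty \in V_0$ for every $g \in G$. Granting the rigidity $V_0=\R$ -- i.e.\ that sublinear-growth harmonic functions inside the finite-dimensional $V$ reduce to constants, which should follow from an adapted Liouville-type argument applied to bounded harmonic functions in the finite-dimensional invariant subspace -- this becomes $L_g h_\infty - h_\infty \equiv \phi(g)\in\R$, and a short computation shows $\phi:G\to\R$ is a continuous additive homomorphism with $h_\infty(x) = h_\infty(e) + \phi(x^{-1})$.

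The crucial step and main obstacle is to conclude $\phi \equiv 0$ from positivity. For this, I would establish a uniform Harnack-type inequality for positive right-$\mu$-harmonic functions, using the courteous hypotheses on $\mu$ (symmetry plus a third moment), to ensure $h_n(e)$ stays bounded in $n$; for instance, if $h_n(e)\to\infty$ then the renormalizations $h_n/h_n(e)$ would be positive harmonic with linear-growth seminorm going to $0$ and value $1$ at $e$, forcing a subsequential limit equal to the constant $1$, which is inconsistent with the choice of denominators. With $h_n(e)$ bounded, the pointwise limit $h_\infty \geq 0$ is bounded below; hence $\phi$ is bounded below, and since $\phi(g^n) = n\phi(g)$ for all $g \in G$ and $n\in\Z$, this forces $\phi \equiv 0$. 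So $h_\infty$ is constant, and $\bar h_\infty = 0$ in $V/V_0$. A concluding averaging step -- exploiting that $K$ acts isometrically on $V/V_0$ so the Haar centroid of the orbit $K\cdot\bar h$ coincides with $\bar h_\infty=0$, combined with the positivity of $\bar h$ in the cone $V^+/V_0$ -- yields $\bar h = 0$, i.e.\ $h$ is constant, contradicting non-constancy. The Harnack estimate is where the courteous hypotheses on $\mu$ are used essentially, and constitutes the technical heart of the argument.
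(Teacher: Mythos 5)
Your strategy (F\o lner averaging in the finite-dimensional space $V=\HF_1(G,\mu)$, quotienting by the sublinear part $V_0$, extracting a homomorphism $\phi:G\to\R$, and killing $\phi$ by positivity) is genuinely different from the paper's, but it has two gaps, each essentially as hard as the theorem itself. First, the rigidity $V_0=\R$: a sublinear-growth harmonic function need not be bounded, so no ``Liouville-type argument applied to bounded harmonic functions'' reaches it, and even the bounded case (that a finite-dimensional $G$-invariant space of bounded harmonic functions on an amenable group consists of constants) is not elementary --- it is the $k=0$ instance of the phenomenon you are trying to prove. Without $V_0=\R$ you get no genuine homomorphism $\phi$, the cone $V^+/V_0$ need not be salient, and the final centroid argument collapses. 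Second, the positivity of $h_\infty$: your lift of $\bar h_\infty$ is only defined modulo $V_0$, so to get $h_\infty\geq 0$ you need genuine pointwise convergence of $h_n$, i.e.\ boundedness of $h_n(e)$. But $h_n(e)$ is the average of the linear-growth function $h$ over $F_n^{-1}$ and will in general diverge like the diameter of $F_n$; a Harnack inequality compares values of a single positive harmonic function at nearby points and cannot prevent this. The contradiction you sketch (``$h_n/h_n(e)$ tends to the constant $1$, inconsistent with the choice of denominators'') is not a contradiction: nothing forbids $h_n$ from blowing up uniformly. And without $h_\infty\geq 0$ the conclusion $\phi\equiv 0$ is unavailable: on any group with a nontrivial continuous homomorphism $\phi$ to $\R$, symmetry and the moment condition make $c-\phi$ a non-constant element of $\HF_1(G,\mu)$, so $\phi\neq 0$ genuinely occurs and can only be excluded through the positivity you have not established.

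For contrast, the paper uses positivity exactly once: a positive harmonic function evaluated along the random walk is a positive martingale, hence converges a.s.; Theorem \ref{thm: convRW} then shows that a continuous, sub-exponentially growing function with finite-dimensional orbit span that converges along random walks is constant. That theorem is proved by realizing $G$ inside $GL(V)$, applying Guivarc'h's theorem that an amenable linear group is virtually block-triangularizable with type $\mathbf{S}$ (scalar times compact) diagonal blocks, and killing the scaling characters block by block using the growth bound together with an ergodicity lemma on compact quotients. If you wish to pursue your averaging route, the two items you must supply are a proof that the sublinear part of a finite-dimensional invariant space of harmonic functions is trivial, and a substitute for the missing uniform control on $h_n(e)$; neither appears easier than the representation-theoretic argument.
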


\begin{proof}[Proof of Theorem \ref{thm:characterization}]
In \cite{Perl18} it is shown that for any CGLC group of polynomial growth, $G$, and any courteous $\mu$ with exponential tail on $G$, the dimension of $\HF_k(G,\mu)$ is finite for all $k \geq 1$. (This is an extension of Kleiner's work \cite{Kleiner} to non-compactly-supported measures,
and to connected CGLC groups.) This gives the implication $(1) \Rightarrow (2)$.

$(2) \Rightarrow (3)$ is trivial.

$(3) \Rightarrow (4)$ follows from Theorem \ref{thm: convergence is infinite dim}.

For $(4) \Rightarrow (1)$: 
Assuming (4), 
by Lemma \ref{lem: exists HF}, there exists $G' \leq \S_d$ a closed connected subgroup 
of the affine group $\S_d$, and a courteous measure $\mu'$ on $G'$
such that $\HF_1(G',\mu')$ does not contain a non-constant positive function.
By  Lemma \ref{lem: linear growth}, it follows that $G'$ must have polynomial growth.
By Lemma \ref{lem: exists HF} again, $G$ has polynomial growth as well.
\end{proof}

\begin{rem}
The exponential tail property of $\mu$ was only used to prove $(1) \Rightarrow (2)$.
It is basically there because the measure $\mu$ needs to have good enough decay 
for the Laplacian to be well defined on polynomially growing functions.

Our proof actually shows that $(3) \Rightarrow (4) \Rightarrow (1)$ even when $\mu$ is only assumed
to be courteous, without the exponential tail assumption.
\end{rem}

\subsection{Convergence along random walks}

Let us stress here that Theorem \ref{thm: convergence is infinite dim} holds in the non-connected case as well, that is, for finitely generated groups. This may be of independent interest in other contexts. 
The main idea behind the proof of Theorem \ref{thm: convergence is infinite dim}
is Theorem \ref{thm: convRW} which states that if a function
converges a.s.\ along the random walk and has sub-exponential growth, 
then it must be constant. 
This is relevant to positive harmonic functions since a positive harmonic function evaluated
on the corresponding random walk provides a positive martingale, which converges a.s.\
by the martingale convergence theorem (see \cite{Durrett}).

\begin{dfn}
A function $f:G \to \R$ {\bf converges along random walks}
if the sequence $(f(xX_t))_t$ converges a.s.\ for any $x \in G$, 
(where $(X_t)_t$ is the $\mu$-random walk).
\end{dfn}

For example, as mentioned above, 
any positive harmonic function converges along random walks.
As do bounded harmonic functions.
Hence, Theorem \ref{thm: convergence is infinite dim} follows from the following theorem,
which may be of independent interest.

\begin{thm}
\label{thm: convRW}
Let $G$ be an amenable CGLC group and let $\mu$ be a courteous measure on $G$.
Let $(X_t)_t$ denote the $\mu$-random walk.
Let $f:G \to \R$ be a continuous function such that $f$ converges along random walks.
Assume that $f$ has sub-exponential growth; that is,
$$ \limsup_{r \to \infty} \tfrac1r \sup_{|x| \leq r } \log |f(x)|  = 0 . $$

If $\dim \mathrm{span} (G.f) < \infty$ then $f$ is constant.
\end{thm}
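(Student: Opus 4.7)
The plan is to use the finite-dimensional $G$-representation on $V := \mathrm{span}(G.f)$ together with the random-walk recursion to force $V$ to consist only of constants. Fix a basis $v_1,\dots,v_n$ of $V$; each $v_i$ is a finite linear combination of left translates of $f$, hence continuous, of sub-exponential growth, and itself converges along random walks (since $(\gamma.f)(xX_t) = f(\gamma^{-1}xX_t)$ is a value of $f$ along a walk from a shifted start). The left $G$-action on $V$ yields a representation $\rho:G\to GL(V)$, and writing $\Phi(x)=(v_i(x))_{i=1}^n\in\R^n$ one checks the covariance $\Phi(xy)=\rho(x^{-1})^T\Phi(y)$ and the bound $\|\rho(\gamma)\|\le C_\eps e^{\eps|\gamma|}$ for every $\eps>0$. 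By hypothesis, $\Phi(X_t)\to\mathbf{L}$ a.s.\ for some random vector $\mathbf{L}$.

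Using $X_{t+1}=X_t S_{t+1}$ and the covariance relation one obtains the key increment identity
\[
\Phi(X_{t+1}) - \Phi(X_t) \;=\; \rho(X_t^{-1})^T\bigl(\Phi(S_{t+1})-\Phi(1)\bigr).
\]
Setting $M_t:=\rho(X_t^{-1})^T$ (which is $\F_t$-measurable) and $\xi_{t+1}:=\Phi(S_{t+1})-\Phi(1)$ (which are i.i.d.\ and independent of $\F_t$), the almost-sure convergence of $\Phi(X_t)$ translates into $M_t\xi_{t+1}\to 0$ a.s. The goal becomes to show $\xi_{t+1}\to 0$ a.s., since by the i.i.d.\ structure this forces $\xi\equiv 0$, i.e.\ $\Phi(S)=\Phi(1)$ for $\mu$-a.s.\ $S$. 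Applied to every $v\in V$, and in particular to $v=\gamma^{-1}.f$, this reads $f(\gamma s)=f(\gamma)$ for every $\gamma\in G$ and $\mu$-a.s.\ $s$; the set of such $s$ is a closed subgroup of $G$ containing $\supp(\mu)$, so by adaptedness $f$ is right-$G$-invariant and hence constant.

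The main obstacle is upgrading $M_t\xi_{t+1}\to 0$ to $\xi_{t+1}\to 0$, and this is where amenability of $G$ and sub-exponential growth of $\rho$ become crucial. The image $\rho(G)\le GL_n(\R)$ is amenable, so its Zariski closure is virtually solvable (Tits alternative applied to its finitely generated subgroups); a finite-index subgroup $H\le G$ has $\rho(H)$ solvable, and after complexifying $V$ the Lie--Kolchin theorem puts $\rho|_H$ in upper-triangular form over $\C$, so $M_t$ is lower-triangular along the subwalk in $H$. The diagonal entries are characters $\chi_i:H\to\C^*$, and sub-exponential growth of $\rho$ forces $\log|\chi_i|$ to be an additive character of $H$ with sub-linear growth, hence identically zero, so every $\chi_i$ is unitary. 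Reading the first row of $M_t\xi_{t+1}\to 0$ then gives $\chi_1(X_t)^{-1}\xi_{t+1,1}\to 0$ a.s., and since $|\chi_1(X_t)^{-1}|=1$ we get $\xi_{t+1,1}\to 0$ a.s.; the i.i.d.\ hypothesis forces the first coordinate of $\xi$ to vanish identically. Inducting down the rows, each step uses unitarity of the diagonal character $\chi_i$ to peel off one coordinate, yielding $\xi\equiv 0$. The delicate bookkeeping involves passing to the subwalk on $H$ (which returns to $H$ infinitely often by finite-state ergodicity of $X_t\bmod H$) and the descent from $\C$ back to $\R$; I expect these to be the most technical part of the proof.
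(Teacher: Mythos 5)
Your overall skeleton is sound --- the covariance identity $\Phi(xy)=\rho(x^{-1})^T\Phi(y)$, the increment identity, the observation that $M_t\xi_{t+1}\to 0$ plus independence forces $\xi\equiv 0$ once you can invert $M_t$ row by row, and the endgame via adaptedness all check out. But there is a genuine gap at the structural step. You claim that amenability of $G$ makes $\rho(G)\le GL_n(\R)$ virtually solvable via the Tits alternative applied to finitely generated subgroups. The Tits alternative concerns amenability of those subgroups \emph{as discrete groups}, whereas $G$ is only assumed amenable \emph{as a topological group}; these are very different. A compact group such as $SO(3)$ is amenable as a topological group yet contains nonabelian free subgroups and acts irreducibly on $\R^3$, so its image is not virtually solvable and cannot be put in triangular form by Lie--Kolchin. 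Nothing in your setup rules this out: $\rho(G)$ can perfectly well have a nontrivial compact, non-solvable part, and your proof never reaches the point where the convergence hypothesis would exclude it. So the triangularization with characters $\chi_i$ on the diagonal, on which the entire ``peeling'' induction rests, is not available.

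The correct substitute is exactly what the paper uses: Guivarc'h's theorem for amenable linear groups, which gives (after passing to a finite-index subgroup) a flag with diagonal \emph{blocks} of type $\mathbf{S}$, i.e.\ of the form $a_i(g)k_i(g)$ with $a_i$ a positive character and $k_i$ taking values in a compact subgroup. Your sub-exponential-growth argument does kill the characters $a_i$ (this is the same computation as Steps I--II of the paper), but one is then left with compact diagonal blocks rather than scalars. The paper disposes of these by passing to the compact quotient $G/H$, where $H=\ker(k_1,\dots,k_n)$, and invoking the ergodic theorem: a non-constant continuous function on a compact group cannot converge along an adapted random walk (Lemmas \ref{lem: compact} and \ref{lem: co compact}). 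Your peeling argument could in fact be repaired without that detour --- since each $k_i$ ranges in a compact group, $\|k_i(X_t)^{-T}v\|\ge c\|v\|$ uniformly, so $(M_t\xi_{t+1})^{(1)}\to 0$ still forces $\xi^{(1)}_{t+1}\to 0$ block by block --- and this would give a somewhat different, arguably more self-contained conclusion to the proof. But as written, with Lie--Kolchin in place of Guivarc'h, the argument does not go through.
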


The proof is carried out in Section \ref{sec: infinite dim'l orbit}. 

The assumption of sub-exponential growth in 
Theorem \ref{thm: convRW} is technical, and most probably superfluous.
In Section \ref{sec: infinite dim'l orbit} we will show that in the discrete case,
where $G$ is finitely generated, this assumption is not actually required.
We conjecture that the assumption of sub-exponential growth can be removed 
in the general CGLC case, see Conjecture \ref{conj: no sub exp} below.
In addition, although the proof heavily uses the amenability of $G$, it is not clear that
this is a necessary condition for Theorem \ref{thm: convRW} to hold. 
Again, see the open questions below.

\subsection{Further questions}

These open questions are motivated by the results mentioned above.

In \cite{MPTY} it is shown that for a finitely generated group $G$ and courteous measure $\mu$ with exponential tail, if $\dim \HF_k(G,\mu) < \infty$ then the space $\HF_k(G,\mu)$ is basically the space of {\em harmonic polynomials} on $G$
of degree at most $k$ (see \cite{MPTY} for precise definitions). This proves that $\dim \HF_k(G,\mu) \in \{ \infty , d \}$ for some $d$ which depends only on the group $G$ and not on $\mu$.

\begin{conj}
Let $G$ be a CGLC group. Let $\mu ,\nu$ be courteous measures on $G$ with an exponential tail. Then $\dim \HF_k(G,\mu) = \dim \HF_k(G,\nu)$ for any $k \geq 0$.
\end{conj}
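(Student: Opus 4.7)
The plan is to split the conjecture into two sub-questions: (a) a \emph{dichotomy} statement asserting that finiteness of $\dim \HF_k(G,\mu)$ is a property of the pair $(G,k)$ alone, independent of $\mu$; and (b) an \emph{equality} statement asserting that in the finite-dimensional regime, the numerical value is the same for all courteous $\mu$ with exponential tail. For $k=0$ the conjecture reduces to the Liouville-invariance conjecture stated at the beginning of the paper, so I would treat that case as out of reach and focus on $k\ge 1$.

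For (a), the connected case is already within reach via Theorem~\ref{thm:characterization}: $\dim \HF_1(G,\mu) < \infty$ iff $G$ has polynomial growth, and the latter is a purely geometric property of $G$; combined with the Perl~\cite{Perl18} result cited in the excerpt, polynomial growth implies $\dim \HF_k(G,\mu) < \infty$ for every $k$ and every courteous $\mu$ with exponential tail. The content of (a) is therefore to promote the characterization from $k=1$ to general $k$, and from connected CGLC groups to arbitrary CGLC groups. For the first I would try to show that if $\dim \HF_k(G,\mu) < \infty$ then $\dim \HF_1(G,\mu) < \infty$, for example by iterating a ``derivation'' operation (finite differences $f \mapsto f - \gamma.f$ for small $\gamma$), which sends $\HF_k$ into $\HF_{k-1}$ and should ultimately produce a nontrivial linear harmonic function if the growth degree of $G$ is too large. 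For the second I would try to use a short-exact-sequence reduction $G_0 \to G \to G/G_0$ (connected component plus discrete quotient), so that the connected case of this paper and the discrete theorem of \cite{MPTY} can be applied component-wise.

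For (b), the discrete case is essentially settled by \cite{MPTY}: in finite dimension, $\HF_k(G,\mu)$ coincides with the space of \emph{harmonic polynomials} of degree $\le k$, a $\mu$-independent algebraic object. The goal would be to establish a CGLC analog of ``harmonic polynomials.'' A natural route is: when $G$ is a connected Lie group of polynomial growth, $G$ is compact-by-nilpotent up to finite index, so $\HF_k(G,\mu)$ should consist of pullbacks of polynomial maps on the Malcev coordinates of the nilpotent quotient that are annihilated by a certain operator depending on $\mu$. One would then show that the \emph{dimension} of the kernel of this operator on each fixed-degree polynomial space is independent of $\mu$, e.g.\ by deforming $\mu$ continuously through courteous measures and showing the dimension is both upper and lower semicontinuous along the deformation. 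The combination of Perl's regularity theory \cite{Perl18} and the compactness/semicontinuity ideas of Kleiner \cite{Kleiner} and Colding–Minicozzi \cite{ColMin} should provide the technical backbone.

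The main obstacle, I expect, is the equality in (b) for \emph{non-nilpotent} connected groups of polynomial growth, such as the $2$-step solvable linear examples of \cite{Bre07} mentioned in Section~\ref{scn: characterization}. On such groups the naive identification ``harmonic functions $=$ harmonic polynomials in Malcev coordinates'' breaks down because the group is not nilpotent-by-compact, so the algebraic structure on which $\mu$-independence would normally be anchored is missing. One would need a substitute algebraic description of $\HF_k(G,\mu)$ on these groups; this is the step where I would expect most of the work to lie, and where a genuinely new idea — perhaps using Lemma~\ref{lem: exists HF} to transport the question to a subgroup of $\S_d$ and then exploiting the explicit affine geometry there — seems to be required.
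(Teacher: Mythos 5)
The statement you are addressing is not a theorem of the paper: it appears in the ``Further questions'' section as an open conjecture, and the paper offers no proof of it. What is actually known, per the paper's own discussion, is (i) the result of \cite{MPTY} that for \emph{finitely generated} $G$ one has $\dim \HF_k(G,\mu) \in \{\infty, d\}$ with $d$ depending only on $G$, and (ii) Theorem~\ref{thm:characterization}, which makes the \emph{finiteness} of $\dim\HF_1$ a $\mu$-independent (geometric) property for \emph{connected} CGLC groups. Your proposal correctly identifies these as the available ingredients, but it is a research outline rather than a proof: you explicitly set aside $k=0$ (which the conjecture includes), and you concede that part (b) for non-nilpotent connected groups of polynomial growth ``seems to require a genuinely new idea.'' That concession is well placed --- the paper itself points out that for the solvable linear example of \cite{Bre07}, $\HF_1$ is finite dimensional yet contains linear-growth harmonic functions that are \emph{not} polynomials, so the entire $\mu$-independence mechanism of \cite{MPTY} (identification with a $\mu$-free space of harmonic polynomials) is unavailable precisely where the conjecture is hardest.

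Two of your intermediate steps also have concrete problems. First, the finite-difference reduction: for $f \in \HF_k(G,\mu)$ the function $f - \gamma.f$ is indeed harmonic (left action commutes with the right-convolution Laplacian), but there is no a priori reason it lies in $\HF_{k-1}$; the gain of one degree of growth under differencing holds for polynomials, not for arbitrary functions with $\|f\|_k < \infty$, and assuming it here essentially presupposes the polynomial structure you are trying to establish. Second, the reduction along $G_0 \to G \to G/G_0$ does not fit the paper's toolbox: $G/G_0$ is totally disconnected but in general not discrete, $G_0$ is not of finite index, and the paper's transfer results (Proposition~\ref{prop: courteous motivation} for finite-index subgroups, and pullback along quotients) give no way to reassemble $\dim\HF_k(G,\mu)$ from the two pieces. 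So the proposal does not close the conjecture, and should not be presented as doing so; at best it is a plausible plan for the connected, nilpotent-by-compact, $k\ge 1$ fragment.
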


Note the we have also included the $k=0$ case in the above conjecture, i.e.\ the space of bounded harmonic functions.

We have seen that the finiteness of the dimension of $\HF_1$ characterizes polynomial growth (at least for connected groups and for solvable groups).
In the connected case, the same solvable linear non-nilpotent-by-compact example mentioned above from \cite{Bre07} shows that one can no longer obtain results analogous to \cite{MPTY}, since this group has finite dimensional $\HF_1$ but linear growth harmonic functions which are not polynomials. If the group $G$ is nilpotent however, one can show that functions in $\HF_k$ are polynomials even in the connected case.

\begin{ques}
Let $G$ be a connected CGLC group. Let $\mu$ be a courteous measure on $G$ with an exponential tail. Let $P^k(G)$ denote the space of polynomials of degree at most $k$ on $G$ (see \cite{MPTY}). Fix $k \geq 1$. Is it true that $\HF_k(G,\mu) \subset P^k(G)$ if and only if $G$ is nilpotent? 
\end{ques}

The results of Choquet \& Deny \cite{CD60}, Raugi \cite{Raugi}, and Yau \cite{Yau} motivate the question of the existence of non-constant positive harmonic functions 
on some group. Specifically:

\begin{ques}
Let $G$ be a CGLC group of non-polynomial growth.
Let $\mu$ be a courteous measure with an exponential tail on $G$.

Is it true that there exists a positive $\mu$-harmonic function that is non-constant?

Is it true that there exists such a function of linear growth?
\end{ques}

Let us remark that our proof of Theorem \ref{thm: main} answers the above question affirmatively, in the connected case (see the characterization in Section \ref{scn: characterization}). 
The results of \cite{MY16} also provide an affirmative answer in the finitely generated solvable case.

It is known that any finitely generated group of exponential growth admits a non-constant positive 
harmonic function, as observed in \cite{AK17}, although the function constructed there may have 
exponential growth, so does not necessarily belong to $\HF_k$.
For finitely generated groups in general we do not know the answer, even for some specific examples, \eg the Grigorchuk groups.

Regarding Theorem \ref{thm: convRW}, as mentioned above, the condition of sub-exponential growth 
seems to be superfluous.

\begin{conj}
\label{conj: no sub exp}
Let $G$ be an amenable CGLC group and $\mu$ a courteous measure on $G$.
Let $f:G \to \R$ be a continuous function that converges along random walks.

If $\dim \mathrm{span} (G.f)  < \infty$ then $f$ is constant.
\end{conj}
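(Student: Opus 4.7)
The plan is to run the same strategy that (presumably) underlies Theorem \ref{thm: convRW}, but replacing every appeal to the sub-exponential growth assumption by a purely pathwise argument. Set $V = \mathrm{span}(G.f)$, a finite-dimensional $G$-invariant subspace of the continuous functions on $G$, and let $\rho: G \to \mathrm{GL}(V)$ denote the induced continuous representation, $(\rho(g) v)(x) = v(g^{-1} x)$. Since each $v \in V$ is a finite linear combination of translates of $f$, every $v \in V$ again converges along random walks: $v(x X_t)$ converges a.s.\ for each $x \in G$. Equivalently, writing $\Phi: V \to \R$ for evaluation at the identity, $\Phi(\rho(X_t^{-1}) u)$ converges a.s.\ for every $u \in V$.

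The key structural input is amenability. Because $G$ is amenable, $\rho(G) \subset \mathrm{GL}(V)$ is amenable, hence virtually solvable by the Tits alternative for linear groups. After replacing $G$ by an open subgroup of finite index (and $\mu$ by the appropriate hitting-time push-forward, which remains symmetric, adapted on that subgroup, and has a third moment) and extending scalars to $\C$, I would put $\rho$ into upper triangular form with respect to a flag $0 \subset V_1 \subset \cdots \subset V_n = V_{\C}$, with characters $\chi_1, \ldots, \chi_n : G \to \C^*$ on the successive quotients. Any nonzero eigenvector $v \in V_1$ satisfies $v(y) = \chi_1(y)^{-1} v(1)$ with $v(1) \neq 0$ (otherwise $v \equiv 0$), so convergence of $v$ along the random walk forces convergence a.s.\ of $\chi_1(X_t) = \prod_{s \leq t} \chi_1(\xi_s)$. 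But an i.i.d.\ product that converges a.s.\ must satisfy $\chi_1(\xi_s) \to 1$ a.s., which forces $\chi_1(\xi) = 1$ $\mu$-a.s.; by adaptedness and continuity this promotes to $\chi_1 \equiv 1$. Passing to the quotient representation on $V_{\C}/V_1$ and iterating, all characters $\chi_i$ are trivial.

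Now $\rho(G)$ lies in the group of unipotent upper triangular matrices, and I would induct on the flag length. For a two-dimensional unipotent block the off-diagonal entry is a continuous homomorphism $a : G \to \R$, and applying the eigenvector argument above to the second basis vector shows that $a(X_t) = \sum_s a(\xi_s)$ converges a.s., whence $a(\xi) = 0$ $\mu$-a.s.\ by the same i.i.d.\ differencing and $a \equiv 0$ by adaptedness. In higher dimensions, the invariant two-dimensional subrepresentations and quotient representations take care of the first-order cocycles; once those vanish, the next-order cocycle reduces to a genuine homomorphism and is killed in the same way, and the induction continues. Thus $\rho$ is trivial, every $v \in V$ satisfies $v(g^{-1} x) = v(x)$, and so $v$ is constant on $G$; in particular $f$ is constant.

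The step I expect to be the main obstacle is the structural one: verifying the Tits-type conclusion (amenable $\Rightarrow$ virtually solvable) for the possibly non-finitely-generated, possibly disconnected image $\rho(G) \subset \mathrm{GL}(V)$, and executing the reduction to a finite-index subgroup so that the hitting-time measure is genuinely courteous, in particular preserves the symmetry and adaptedness needed in the character-killing step. Once that machinery is in place, everything that follows is pathwise and uses nothing on $f$ beyond continuity, which is precisely why dropping the sub-exponential growth hypothesis of Theorem \ref{thm: convRW} appears plausible.
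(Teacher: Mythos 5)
You are attempting to prove a statement that the paper explicitly leaves open: the authors only prove Theorem \ref{thm: convRW} (which carries the sub-exponential growth hypothesis) and remove that hypothesis only for finitely generated $G$, where the kernel $H$ of the compact parts is automatically of finite index. So there is no proof in the paper to match yours against; the question is whether your argument closes the gap, and it does not. The fatal step is ``$G$ amenable $\Rightarrow \rho(G)$ virtually solvable by the Tits alternative.'' Amenability of $G$ as a \emph{topological} group does not make $\rho(G)$ amenable as an \emph{abstract} group, which is what the Tits alternative requires: $G = \mathrm{SO}(3)$ is compact, hence an amenable CGLC group, yet it contains nonabelian free subgroups and is not virtually solvable, so its image under a faithful representation cannot be put in triangular form with characters on the diagonal. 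The correct structural input (Guivarc'h, as used in the paper) only gives type $\mathbf{S}$ blocks $a_i(g)k_i(g)$ with $k_i$ landing in a compact, generally nonabelian and infinite group $K_i$ --- not one-dimensional characters. With such blocks your pathwise argument breaks down: for $h \in V_1$ one gets $h(X_t) = a_1(X_t)^{-1}\langle k_1(X_t)^{-1}[h]_B, v_1\rangle$, where the second factor is bounded but oscillates and is correlated with the first, so a.s.\ convergence of $h(X_t)$ no longer forces degeneracy of the symmetric walk $\log a_1(X_t)$ (convergence could a priori occur through cancellation between the two factors). The paper's escape is to restrict to the co-compact normal subgroup $H = \ker(k_1,\dots,k_n)$, where the compact parts are the identity; but $H$ is in general not of finite index, the random walk need not ever visit it, and that is precisely the point where the deterministic estimate $h(x^{-n}) = a_1(x)^n h(1)$ for $x \in H$, combined with sub-exponential growth, is invoked. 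Your proposal has no substitute for this step.

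Two further remarks. First, even in the genuinely one-dimensional (character) case your justification is too quick: an i.i.d.\ product that converges a.s.\ need not have increments tending to $1$ (it may converge to $0$); what saves the argument is the symmetry of $\mu$, which makes $\log|\chi_1(X_t)|$ a symmetric random walk on $\R$, and such a walk oscillates a.s.\ unless degenerate, so $|h(X_t)| = |\chi_1(X_t)|^{-1}|h(1)|$ cannot converge unless $|\chi_1| \equiv 1$ on $\supp\mu$ (and then adaptedness finishes); the unimodular part is handled by equidistribution on the closure of the generated subgroup, as in Lemma \ref{lem: compact}. Second, your unipotent induction is essentially the paper's own argument for the finitely generated case (the sketch following Theorem \ref{thm: convRW}) and is sound once the diagonal action is genuinely trivial; the flag in fact collapses after the $V_2$ step, so no higher-order cocycle analysis is needed. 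But none of this repairs the missing structural step, so the proposal does not prove Conjecture \ref{conj: no sub exp}.
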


Finally, we only know how to prove Theorem \ref{thm: convRW} for amenable groups.
It would be quite surprising if this theorem does not hold in the non-amenable case.
Precisely:
\begin{ques}
If $G$ is a CGLC group, 
is it true that for any continuous function $f:G \to \R$ that converges along random walks,
if $f$ is non-constant its orbit spans an infinite dimensional space?
\end{ques}

\section{Linear growth positive harmonic function}
\label{sec: overview}

\subsection{Stationary measure on $\S_d$}

Denote by $\S_d$ the group of affine similarities on $\R^d$. An element of $\S_d$ is $g=(a,k,b)$ where $a\in (0,\infty) , k\in O(d),b\in \R^d$. Here $O(d)$ is the group of $d \times d$ orthogonal real matrices. (Note that in the $1$-dimensional case, $\S_1$, we may omit the $k$-coordinate).
The group's multiplication is defined by
\begin{align*}
g_1\cdot g_2=(a_1,k_1,b_1)\cdot (a_2,k_2,b_2):=(a_1a_2,k_1k_2, a_1k_1 b_2+b_1).
\end{align*}
$\S_d$ acts from the left on $\R^d$ by $(a,k,b).x=ak x+b$. For an element $g=(a,k,b)$, let $a(g)=a$.

Let $G$ be a closed subgroup of $\S_d$, and let $\mu$ be a courteous probability measure on $G$. 
Let $\nu$ be a measure on $\R^d$, and define
\begin{align*}
\mu \ast \nu(A)=\int_G \int_{\R^d}1_A(g.x)d\nu(x)d\mu(g).
\end{align*}
for any measurable set $A$. 

A Radon measure $\nu$ on $\R^d$ is called
$\mu$-\textit{stationary} if $\mu \ast \nu=\nu$. 

\begin{rem}
In \cite{BBE97} and related texts, a measure satisfying $\mu \ast \nu = \nu$ is called {\em $\mu$-invariant}.
One should be careful to distinguish between invariance of $\nu$ with respect to convolution with
the measure $\mu$, and the different notion of 
a {\em $G$-invariant measure}, which means $g.\nu = \nu$ for all $g \in G$.  
In this text we only deal with the former. In order to avoid confusion, 
we prefer the terminology {\em stationary}.
\end{rem}

The existence of a $\mu$-stationary Radon measure is shown in \cite{BBE97}. 
Namely, we have:

\begin{lem}[Proposition 1.1 in \cite{BBE97}]
\label{lem: invariant msr}
Let $G$ be a closed CGLC subgroup of $\S_d$, and $\mu$ a courteous measure on $G$. 
Then there exist a $\mu$-stationary (unbounded) Radon measure $\nu$ on $\R^d$. 
\end{lem}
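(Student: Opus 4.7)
The plan is to obtain $\nu$ as a subsequential vague limit of suitably normalized Cesàro averages of push-forwards of a reference measure. The left action of $G$ on $\R^d$ induces a Markov kernel $P(x,A) = \mu(\{g \in G : g.x \in A\})$ whose adjoint on Radon measures is $P^*\eta = \mu*\eta$, so a $\mu$-stationary Radon measure is precisely a nonzero $P^*$-fixed point. Fix a compactly supported reference probability $\eta_0$ (e.g.\ normalized Lebesgue on the unit ball) and a compact set $B$ with $\eta_0(B) > 0$, and form
\[
\sigma_n = \frac{1}{n}\sum_{k=0}^{n-1}\mu^{*k}*\eta_0, \qquad \tilde\sigma_n = \sigma_n(B)^{-1}\,\sigma_n.
\]
Telescoping gives $P^*\sigma_n - \sigma_n = \tfrac{1}{n}(\mu^{*n}*\eta_0 - \eta_0)$, a difference of probability measures, so after normalization the defect $P^*\tilde\sigma_n - \tilde\sigma_n$ vanishes vaguely as soon as $n\,\sigma_n(B)\to\infty$.

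The heart of the argument is then a pair of uniform estimates: a tightness-type upper bound $\sup_n \tilde\sigma_n(K) < \infty$ for every compact $K \subset \R^d$, together with the companion lower bound $\sigma_n(B) \gtrsim 1/n$ needed above. Granted both, a diagonal extraction along an exhaustion of $\R^d$ by compact sets yields a vague subsequential limit $\nu$ which is Radon, satisfies $\nu(B) \geq 1$ (hence is nonzero), and obeys $P^*\nu = \nu$. The bounds come from the affine structure of $\S_d$: the dilation coordinate $g = (a, k, b) \mapsto \log a(g)$ is a continuous group homomorphism $G \to \R$, so $\log a(g_1\cdots g_n)$ is a real-valued random walk. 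Symmetry of $\mu$ makes its step symmetric, and the third-moment hypothesis gives finite first moment; by Chung--Fuchs this walk is either trivial --- in which case $G$ lies in the isometry group and one constructs $\nu$ directly from Haar measure on a suitable orbit --- or genuinely recurrent on $\R$, and its local-limit behavior controls how iterates of $P^*$ distribute mass on any compact set in $\R^d$.

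The principal obstacle I expect is the simultaneous balancing of these upper and lower estimates in the genuinely unbounded case. One must prevent both concentration of $\tilde\sigma_n$ on small regions (so $\nu$ remains Radon) and escape of mass to infinity faster than the normalization $\sigma_n(B)^{-1}$ can compensate (so $\nu$ is nontrivial). In the contracting case this is painless and a Krylov--Bogolyubov argument delivers a stationary probability. The substantive novelty of \cite{BBE97} is the null case: one needs a quantitative recurrence estimate for the dilation walk $\log a$, together with cocycle control on the translational coordinate $b$, to calibrate the normalization correctly. This is where I expect the third-moment hypothesis to be used essentially.
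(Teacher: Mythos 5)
The paper does not actually prove this lemma: it is imported wholesale as Proposition~1.1 of \cite{BBE97}, so there is no internal argument to compare yours against, and your sketch has to be judged on its own terms and against that source. Its general shape is right --- the stationary measure in the critical case is indeed produced as a normalized limit of occupation/Ces\`aro averages, and you correctly identify recurrence of the dilation walk $\log a$ (forced by symmetry of $\mu$ plus the moment hypothesis, via Chung--Fuchs) as the engine --- but as written there are two genuine gaps.

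First, the concluding step ``$P^*\nu=\nu$'' does not follow from vague convergence. For $f\in C_c(\R^d)$ the function $Pf$ is continuous but in general not compactly supported, so passing to a vague limit along your subsequence only yields the excessivity $P^*\nu\le\nu$: mass can leak in from infinity. Upgrading an excessive Radon measure to an invariant one is a separate step that uses conservativity/recurrence of the chain (this is how \cite{BBE97} proceed, via the theory of recurrent Feller chains once topological recurrence of the affine recursion is established); your sketch silently assumes the exchange of limits. Second, the ``pair of uniform estimates'' you name --- $\sup_n \sigma_n(K)/\sigma_n(B)<\infty$ for every compact $K$, and the divergence $n\,\sigma_n(B)\to\infty$ --- are precisely the mathematical content of the proposition, and neither is proved. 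Note moreover that the lower bound you actually record, $\sigma_n(B)\gtrsim 1/n$, is trivially true (since $\sigma_n\ge \eta_0/n$) and is not what your own telescoping argument requires: you need the Green's function $\sum_{k}\mu^{*k}*\eta_0(B)$ to diverge, i.e.\ recurrence of the $\R^d$-valued chain itself, and recurrence of the one-dimensional walk $\log a$ does not by itself prevent the translational coordinate $b$ from driving the affine recursion to infinity. Calibrating that interaction is exactly where the moment hypotheses are spent in \cite{BBE97} and is the part your proposal defers. In short: a reasonable road map, not a proof; since the paper itself treats the statement as a black box, the honest resolution is the citation.
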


\subsection{Linear growth harmonic functions}
\label{scn:linear growth HFs}

Let $\nu$ be a $\mu$-stationary Radon measure on $\R^d$. 
Let $\phi:\R^d\to\R$ be a compactly supported function. 
Define
\begin{align}
\label{dfn: harmonic function}
h(g):=\int_{\R^d} \phi(g.x)d\nu(x).
\end{align}
It is straightforward to check that $h$ is a $\mu$-harmonic function on $G$,
because $\nu$ is $\mu$-stationary. 
If $\phi \geq 0$ then $h \geq 0$. If $\ind{A_1}\leq \phi \leq \ind{A_2}$ 
for some measurable sets $A_1\subset A_2$, 
then $\nu(g.A_1) \leq h(g) \leq \nu(g.A_2)$ for all $g \in G$. 
Also, if $\phi$ is continuous, then $h$ is continuous as well.

Let $B=[-1,1]^d\subset \R^d$. Fix a compactly supported continuous function 
$\phi : \R^d \to \R$ such that $\ind{ (1/2) B} \leq \phi \leq \ind{B}$. 
Define $h$ as in \eqref{dfn: harmonic function}. By 
Lemma 2.10 in \cite{BE95} $h$ is non-constant as soon as $G$ does not have polynomial growth
(since $\nu$ can be chosen so that it is not $G$-invariant).

We want to show that $h$ has linear growth, \ie that there exists a constant $c_h>0$ such that 
$h(g)\leq c_h(1+|g|)$ for all $g\in G$. 
By compactness and the continuity of the action, there exists a constant $M>1$ such that 
$k.B\subset [-M,M]^d$ for all $k$ in the compact symmetric generating set $K$. 
By induction, this implies $g.B\subset [-M^{|g|},M^{|g|}]^d$ for all $g\in G$. 
Hence, to show linear growth of $h$, it 
will be suffice to show that 
\begin{align}
\label{measure growth}
\nu([-z,z]^d)\leq C(1+\log z) \qquad \forall  \ z>1
\end{align}
for some constant $C>0$. In \cite[Proposition 3.1(1)]{BB15}, this is shown for the case of $d=1$. 
Their proof relies on the total ordering of the real numbers, 
hence it does not generalize to $\R^d$ in a straightforward manner. 
We now use results from \cite{BB15} to prove the general $d$-dimensional case.

Let $\psi=(a,k,b)$ be a random element generated by the probability measure $\mu$ on $\S_d$. 
We assume the following:
\begin{itemize}
\item Recurrence: $\E[\log(a)]=0$ and $\Pr [ a = 1 ] \neq 1$. 
\item Non-degeneracy: $\Pr[\psi.x=x]<1$ for all $x\in \R^d$.
\item Moment condition: $\E [\big(\ |\log(a)|+\log(1+||b||)\ \big)^3]<\infty$. 
\end{itemize}
We note that by \cite{Elie82,Gui80}, there exist constants $C,D>0$ such that  
\begin{align*}
C^{-1} |\psi|-D< |\log(a)|+\log(1+||b||) < C|\psi|+D.
\end{align*}
Hence, the above moment condition is equivalent to existence of a third moment of $\mu$.

In the above, and throughout this section, the $|| \cdot ||$ norm on $\R^d$ is the $L^\infty$-norm;
\ie $|| x || = \max_{1 \leq j \leq d} |x_j|$.

Next, for an element $\psi$ as above, we define 
\begin{align}
\label{g}
g_{\psi}:= \big( a, \max\{||b|| , 1\} \ \big) \in \S_1.
\end{align} 
We denote by $(\psi_t)_{t\geq 0}$ a sequence of i.i.d.\ $\mu$ random elements,
and abbreviate by $(g_t)_{t\geq 0}$ the induced sequence $(g_{\psi_t})_{t\geq 0}$. 
For an element $g=(a,b)\in\S_1$, denote $a(g)=a, b(g)=b$. 
Finally, we define $R_0 = 0$ and $\Psi_0 = (1,I,0)$ as the identity element in $\S_d$,
as well as 
$$ R_t =g_1\cdots g_t \qquad \qquad \Phi_t = \psi_1 \cdots \psi_t . $$

Note that 
\begin{align}
\label{b}
b(R_{t+1}) &=a(R_t)b(g_{t+1})+b(R_t) , 
\end{align}
which implies that also
\begin{align}
\label{control}
|| \Phi_t.x ||  \leq R_t . ||x|| \quad \forall x\in\R^d.
\end{align}

\begin{lem}
\label{lem: OST}
Let $U,V$ be two 
compact Borel subsets of $\R^d$. 
Define 
$$ T=T_{U,V}:=\inf\{t\geq 0 \ : \ \Phi_t .U \subset V \} . $$
Then,
\begin{align*}
\nu(V)\geq \Pr[T<\infty]\cdot \nu(U).
\end{align*}
\end{lem}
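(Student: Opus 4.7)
The plan is to exploit $\mu$-stationarity of $\nu$ to produce a nonnegative martingale out of $\nu(\Phi_t^{-1}.V)$ and then apply optional stopping at $T$. The basic identity I need is that $\mu\ast\nu=\nu$ unfolds as
$$
\nu(A) \;=\; \int_G \nu(g^{-1}.A)\, d\mu(g)
$$
for every Borel $A\subset\R^d$.

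Set $M_t := \nu(\Phi_t^{-1}.V)$ and $\F_t := \sigma(\psi_1,\ldots,\psi_t)$. Since $V$ is compact and affine maps are continuous, $\Phi_t^{-1}.V$ is itself compact, so the Radon property of $\nu$ gives $M_t<\infty$ pointwise; and by iterating the displayed identity, $\E M_t=\nu(V)<\infty$. Using $\Phi_{t+1}^{-1}=\psi_{t+1}^{-1}\Phi_t^{-1}$, the Markov property of $(\Phi_t)$, and the stationarity identity applied to the $\F_t$-measurable set $A=\Phi_t^{-1}.V$,
$$
\E[M_{t+1}\mid \F_t] \;=\; \int_G \nu\!\bigl(g^{-1}.\Phi_t^{-1}.V\bigr)\, d\mu(g) \;=\; \nu(\Phi_t^{-1}.V) \;=\; M_t,
$$
so $(M_t)$ is a nonnegative $(\F_t)$-martingale with constant mean $\nu(V)$.

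Since $T$ is an $(\F_t)$-stopping time, the truncated process $M_{T\wedge t}$ is again a nonnegative martingale with $\E M_{T\wedge t}=\nu(V)$. Dropping the nonnegative contribution from $\{T>t\}$ gives
$$
\nu(V) \;\geq\; \E\!\bigl[M_T\,\ind{\{T\leq t\}}\bigr].
$$
On $\{T<\infty\}$ the definition of $T$ yields $\Phi_T.U\subset V$, equivalently $U\subset \Phi_T^{-1}.V$, and hence $M_T\geq \nu(U)$. Thus $\nu(V)\geq \nu(U)\,\Pr[T\leq t]$, and monotone convergence as $t\to\infty$ delivers $\nu(V)\geq \nu(U)\,\Pr[T<\infty]$. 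There is no serious obstacle: the only care needed is that $T$ may be neither a.s.\ finite nor integrable, which is exactly why I stop at $T\wedge t$ and use nonnegativity of $M$ to pass to the limit; everything else is immediate from stationarity and standard facts about nonnegative martingales.
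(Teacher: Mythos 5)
Your proof is correct and follows essentially the same route as the paper: turn $\nu(\Phi_t^{-1}.V)$ into a nonnegative martingale via stationarity, apply optional stopping at $T\wedge t$, use $U\subset \Phi_T^{-1}.V$ on $\{T<\infty\}$, and let $t\to\infty$. The only difference is that you spell out the finiteness of $M_t$ via the Radon property, which the paper leaves implicit.
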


\begin{proof}
Let $M_t : =  \nu (\Phi_t^{-1} . V )$.
Because $\nu$ is $\mu$-stationary, this process is a (positive) martingale.
Indeed, 
\begin{align*}
\E [ M_{t+1} \ | \ \Phi_0, \ldots, \Phi_t ] & = \int d \mu(\psi) \nu( \psi^{-1} \Phi_t^{-1}  . V )
= \mu \ast \nu (\Phi_t^{-1}. V) = \nu( \Phi_t^{-1} .V) = M_t .
\end{align*}
For any $t>0$, by the Optional Stopping Theorem (see \eg \cite{Durrett}) at time $T \wedge t$,
we have that 
\begin{align*}
\nu(V) & = M_0 = \E [ M_{T \wedge t} ] \geq \E [ M_T \1{ T < t } ] \geq \nu(U) \cdot \Pr [ T < t ] ,
\end{align*}
where we have used that $U \subset \Phi_T^{-1} . V$ a.s.

Sending $t \to \infty$ completes the proof.
\end{proof}

\begin{lem}
\label{lem: reduction}
Fix a constant $k_0>1$. Define the following subsets of $\S_1$: 
\begin{align*}
&V_0=\{(a,b):\ k_0^{-1}\leq a \leq k_0,\ |b|\leq k_0 \ \}, \\
&V_z=V_0 \cdot (z^{-1},0)=\{ (a,b): \ k_0^{-1}\cdot z^{-1} \leq a \leq k_0\cdot z^{-1},\ |b|\leq k_0 \ \}.
\end{align*}
We have 
\begin{align*}
\nu( [-2k_0,2k_0]^d )\geq \Pr[T_{V_z}<\infty]\cdot \nu([-z,z]^d),
\end{align*}
where $T_{V_z}=\inf\{t:\ R_t\in V_z\}$.
\end{lem}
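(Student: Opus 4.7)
The plan is to apply Lemma \ref{lem: OST} with the pair $U = [-z,z]^d$ and $V = [-2k_0,2k_0]^d$. The key is to verify that the one-dimensional event $\{R_t \in V_z\}$ implies the multi-dimensional event $\{\Phi_t.U \subset V\}$; once this is in hand, we get $T_{U,V} \leq T_{V_z}$, hence $\Pr[T_{U,V} < \infty] \geq \Pr[T_{V_z} < \infty]$, and plugging into Lemma \ref{lem: OST} delivers the desired inequality.

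For the implication, suppose $R_t = (a,b) \in V_z$. First I would observe that $b = b(R_t) \geq 0$: indeed, iterating (\ref{b}) expresses $b(R_t)$ as a sum $\sum_{s=1}^{t} a(R_{s-1}) \, b(g_s)$, all of whose terms are nonnegative (since $a(\cdot)>0$ and $b(g_s) = \max\{||b_s||,1\} \geq 1$). Combined with the $V_z$ constraint $|b| \leq k_0$, this gives $0 \leq b \leq k_0$, while the other constraint reads $a \leq k_0 z^{-1}$. Then for any $x \in [-z,z]^d$ we have $||x|| \leq z$, so by the bound (\ref{control}),
\begin{align*}
||\Phi_t.x|| \leq R_t . ||x|| = a\cdot ||x|| + b \leq k_0 z^{-1} \cdot z + k_0 = 2k_0,
\end{align*}
hence $\Phi_t.x \in [-2k_0,2k_0]^d$, as desired.

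There is no serious obstacle to this step—the content is essentially a repackaging of the $d$-dimensional martingale stopping bound (Lemma \ref{lem: OST}) in terms of the comparison process $R_t$ on $\S_1$. The only mildly delicate point is the positivity of $b(R_t)$, which is what makes the one-sided constraint $|b|\leq k_0$ in the definition of $V_z$ sufficient to control the translation part in (\ref{control}). The strategic purpose of this reduction is what matters: it reduces the $d$-dimensional estimate (\ref{measure growth}) on $\nu([-z,z]^d)$ to a stopping-time estimate for the auxiliary process $(R_t)_t \subset \S_1$, where the one-dimensional methods of \cite{BB15} become available.
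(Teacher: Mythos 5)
Your proof is correct and follows essentially the same route as the paper: verify that $R_t\in V_z$ forces $\Phi_t.[-z,z]^d\subset[-2k_0,2k_0]^d$ via \eqref{control}, deduce $T_{U,V}\leq T_{V_z}$, and apply Lemma \ref{lem: OST}. The only difference is your digression on the positivity of $b(R_t)$, which is true but not needed here — the upper bound $b\leq k_0$ already follows from $|b|\leq k_0$, and that is all the estimate $a\,\|x\|+b\leq 2k_0$ requires.
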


\begin{proof}
Note that if $R_t\in V_z$ and $x\in \R^d$ satisfies $||x||\leq z$, then $R_t . ||x||\leq 2k_0$,
and thus by \eqref{control}, we have $||\Phi_t(x)||\leq 2k_0$. 
So if we take $V=[-2k_0,2k_0]^d$ and $U=[-z,z]^d$, we get that $T_{V_z} \geq T_{U,V}$.
Applying Lemma \ref{lem: OST},
\begin{align*}
\nu(V)\geq \Pr[T_{U,V}<\infty]\cdot \nu(U)\geq \Pr[T_{V_z}<\infty]\cdot \nu(U) .
\end{align*}
\end{proof}

\begin{proof}[Proof of  Lemma \ref{lem: linear growth}]
Lemma 3.4(2) in \cite{BB15} states that under our assumptions on $\mu$, 
there exists some $k_0 > 1$ and $\delta>0$, 
such that for $V_0, V_z$ as in Lemma \ref{lem: reduction},
we have for all $z \geq 1$,
$$ \Pr[T_{V_z}<\infty]>\frac{\delta}{1+\log z}. $$
Plugging this into Lemma \ref{lem: reduction}, 
we arrive at
$$ \nu( [-z,z]^d) \leq \nu( [-2k_0, 2k_0]^d ) \cdot \frac{1+ \log z }{\delta} , $$
for all $z \geq 1$.
This proves \eqref{measure growth}, which is sufficient to obtain Lemma \ref{lem: linear growth},
as remarked above (before \eqref{measure growth}).
\end{proof}

\subsection{From locally compact groups to $\S_d$.}

\label{scn:from LC to Sd}

In this section we will overview the reduction from general CGLC connected groups to the case of closed subgroups of $\S_d$.

Let $G$ be a connected CGLC group, and $\mu$ a courteous measure on $G$. Let $(X_t)_{t\geq 0}$ be a $\mu$-random walk on $G$, \ie $X_0=1$ and the increments $X_t^{-1}X_{t+1}$ are independent $\mu$-random variables. Let $H\leq G$ be a finite-index subgroup, and define the return time to $H$ by $\tau_H=\inf\{t:\ X_t\in H\}$. It is well known that since $H$ is of finite index, $\tau_H$ is almost surely finite. Define the {\em hitting measure}
$\mu_H$ on $H$ by 
$$
\mu_H(A)=\Pr[X_{\tau_H}\in A].
$$
In \cite{BE95}, it is shown that $\mu_H$ is a courteous measure on $H$. It is then shown, that if $f_H$ is a $\mu_H$-harmonic function on $H$, then $f(g):=\E[f_H(X_{\tau_H})|\ X_0=g]$ is a $\mu$-harmonic function on $G$. In fact, we have:
\begin{prop}[\cite{BE95} Lemma 3.4, \cite{MY16} Proposition 3.4] 
\label{prop: courteous motivation}
Let $G$ be a CGLC group, $\mu$ a courteous measure, and $H$ a finite index subgroup. Then $\mu_{H}$ is a courteous measure on $H$. 
Moreover, for any $k$ 
the restriction map $f\mapsto f|_{H}$ is a linear isomorphism from $\HF_{k}(G,\mu)$ to $\HF_{k}(H,\mu_{H})$.
\end{prop}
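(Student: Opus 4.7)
The proof splits naturally into two parts: verifying that $\mu_H$ is courteous, and establishing the isomorphism. For the isomorphism, the plan is to construct an explicit inverse to the restriction map $R: f \mapsto f|_H$, namely the Poisson-type extension $E(f_H)(g) := \E[f_H(X_{\tau_H}) \mid X_0 = g]$, and to check that both $R$ and $E$ send harmonic functions to harmonic functions while preserving degree-$k$ polynomial growth. The key technical ingredient throughout is that $\tau_H$ has exponential tails: since $H$ is closed of finite index (and hence open in $G$), the projection $\pi(X_t) \in G/H$ is an irreducible Markov chain on a finite state space, so $\Pr[\tau_H > t] \leq C e^{-ct}$ for some constants $C, c > 0$.

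For courteousness of $\mu_H$, symmetry would follow from a time-reversal argument: excursions $(1 = X_0, X_1, \ldots, X_n = h)$ with $X_1, \ldots, X_{n-1} \notin H$ have, by symmetry of $\mu$, the same law (after reversing the increment sequence) as excursions from $1$ first hitting $H$ at $h^{-1}$; summing over $n$ yields $\mu_H(A) = \mu_H(A^{-1})$. Adaptedness on $H$ follows from $\mu$-adaptedness on $G$ together with the fact that the $n$-fold convolution $\mu_H^{*n}$ is the law of the $n$-th successive return of $X$ to $H$, so the union of their supports is dense in $H$, forcing the closed subgroup of $H$ generated by $\supp \mu_H$ to equal $H$. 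Finally, the third moment of $\mu_H$ follows from the pointwise bound $|X_{\tau_H}| \leq \sum_{i=1}^{\tau_H} |Y_i|$, where $Y_i$ are i.i.d.\ $\mu$, together with a Minkowski-type inequality that couples the exponential tail of $\tau_H$ with the assumed third moment of $\mu$.

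It remains to verify that $R$ and $E$ are mutual inverses and preserve $\HF_k$. For $f \in \HF_k(G, \mu)$, the process $M_t = f(X_t)$ is a $\mu$-martingale, and uniform integrability of the family $(M_{t \wedge \tau_H})_t$ --- obtained from $|M_{t \wedge \tau_H}| \leq C(1 + |X_{t \wedge \tau_H}|)^k$ combined with the exponential tail of $\tau_H$ and the moment estimates above --- justifies optional stopping and gives $f(g) = \E[f(X_{\tau_H}) \mid X_0 = g] = E(f|_H)(g)$. The strong Markov property at $\tau_H$ then shows that $f|_H$ is $\mu_H$-harmonic, and the bi-Lipschitz equivalence of the restriction of $|\cdot|_G$ to $H$ with the intrinsic word metric on $H$ shows that $f|_H$ still has degree-$k$ polynomial growth. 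Conversely, for $f_H \in \HF_k(H, \mu_H)$, harmonicity of $E(f_H)$ is immediate from the strong Markov property, and the bound $|E(f_H)(g)| \leq C \, \E[(1+|X_{\tau_H}|)^k \mid X_0 = g] \leq C'(1+|g|)^k$ shows $E(f_H) \in \HF_k(G, \mu)$; the identity $R(E(f_H)) = f_H$ is trivial because $\tau_H = 0$ when $X_0 \in H$. The main technical obstacle is coupling the exponential tail of $\tau_H$ with polynomial growth in a sufficiently sharp way to obtain both the uniform integrability needed for optional stopping and the $L^k$-type control of $X_{\tau_H}$ needed for the growth preservation; once this is in hand, the rest reduces to standard martingale and Markov property arguments.
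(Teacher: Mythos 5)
Your proposal follows exactly the route the paper takes (the paper itself gives no proof, citing \cite{BE95} Lemma 3.4 and \cite{MY16} Proposition 3.4, but the surrounding text in Section 2.3 describes precisely your extension operator $E(f_H)(g)=\E[f_H(X_{\tau_H})\mid X_0=g]$ as the inverse of restriction): hitting measure, time-reversal for symmetry of $\mu_H$, exponential tail of $\tau_H$ via the finite coset chain, and optional stopping with uniform integrability for the isomorphism. The only points left implicit are the continuity of $E(f_H)$ (needed since $\HF_k$ consists of continuous functions) and the fact that your Minkowski/uniform-integrability bookkeeping closes under the bare third-moment hypothesis only for $k\le 3$ --- for larger $k$ one needs the higher moments that are in any case required for $\Delta_\mu$ to be well defined on degree-$k$ functions --- neither of which affects the correctness of the approach.
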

Put simply, by passing to a finite index subgroup, the space of harmonic function of polynomial growth of degree at most $k$ is essentially the same. This proposition indicates that courteous measures provide a suitable framework to prove Conjecture \ref{conj:main}.

One can also pass to a continuous image of the group. Let $\pi:G\to Q$ be a continuous surjective 
homomorphism. In \cite[Lemma 3.1]{BE95}, it is shown that if $\mu$ is courteous on $G$ then $\mu_Q:=\mu\circ\pi^{-1}$ is 
courteous on $Q$. It is then straightforward to show that if $f_Q$ is a $\mu_Q$-harmonic function with linear growth on $Q$, then $f=f_Q\circ \pi$ is a $\mu$-harmonic function with linear growth on $G$. 

By the above, we may freely pass to finite-index subgroups and quotients, 
as long as the growth does not change.

\begin{proof}[Proof of Lemma \ref{lem: exists HF}]
A theorem of Yamabe and Gleason (see \eg \cite{mont_zipp}) tells us that 
since $G$ is connected, 
for any open neighborhood $U$ of the identity in $G$,
we may find a compact normal subgroup $K \subset U$, such that $G/K$ is a Lie group.
Since $K$ is compact, $G/K$ and $G$ have the same growth.
So we may, without loss of generality, assume that $G$ is a connected Lie group.

Using the Yamabe-Gleason theorem, Jenkins proves in \cite{jenkins} that $G$ must have 
either polynomial growth or exponential growth.  

If $G$ has polynomial growth, then we can just take $G' \leq \S_1$ to be isomorphic to $\R$, 
with $\mu'$ uniform on the isomorphic copy of $[-1,1]$.  In this case 
it is not difficult to prove that $\HF_1(G',\mu')$ is just the space of affine transformations,
so $\dim \HF_1(G',\mu') = 2$ and there are no non-constant positive harmonic functions.

So assume that $G$ has exponential growth.

We now proceed similarly to the proof of Theorem 1.4 in \cite{BE95}:

As a connected Lie group of exponential growth, 
Lemma 3.10 in \cite{BE95} tells us that we may find a homomorphism $\pi : G \to \mathsf{GL}(\R^d)$
such that $\pi(G)$ has exponential growth. As a homomorphic image of $G$,
the dimension of $\HF_1$ on $\pi(G)$ cannot increase, as mentioned above.
So we may further assume without loss of generality that $G$ is a connected closed subgroup 
of $\mathsf{GL}(\R^d)$, that has exponential growth.

If $G$ is non-amenable, it admits non-constant bounded harmonic functions, as is well known
(see \eg \cite[Theorem 1.1]{BE95}).
So assume that $G$ is amenable.
In this case, 
by Proposition 3.9 in \cite{BE95} (see also \cite[Definition 3.5]{BE95}) 
there exist a finite-index normal subgroup $H \lhd G$,
and a homomorphism $\rho : H \to \S_d$ such that $\rho(H)$ has exponential growth.
Because $H$ has finite index in $G$, we get that the dimension of 
$\HF_1(H,\mu_H)$ and $\HF_1(G,\mu)$ are
the same, completing the proof.
\end{proof}

\section{Infinite dimensional orbit}
\label{sec: infinite dim'l orbit}

In this section we prove
Theorem \ref{thm: convRW}.

\begin{lem}
\label{lem: convRW FI}
Let $G$ be a CGLC group and let $\mu$ be a courteous measure on $G$.
Let $H \leq G$ be a subgroup of finite index and let $\mu_H$ be the hitting measure.

If $f:G \to \R$ converges along random walks (with respect to $\mu$) then 
the restriction $f \big|_H$ converges along random walks (with respect to $\mu_H$).
\end{lem}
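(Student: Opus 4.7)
My plan is to realize the $\mu_H$-random walk on $H$ as the (random) subsequence of the ambient $\mu$-random walk on $G$ obtained by sampling at the successive return times to $H$. Once this coupling is in place, convergence of $f$ along the $\mu$-walk transfers essentially for free to convergence of $f|_H$ along the $\mu_H$-walk, since the latter is simply a sub-subsequence of the former.

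Concretely, I would let $(X_t)_{t\geq 0}$ be a $\mu$-random walk with $X_0 = 1$, and define stopping times $\sigma_0 = 0$ and $\sigma_{n+1} = \inf\set{t > \sigma_n : X_t \in H}$. By the strong Markov property, $W_n := X_{\sigma_n}$ is then a $\mu_H$-random walk on $H$ starting at $1$; this is precisely the classical construction of the hitting chain already invoked in \cite{BE95, MY16} to define $\mu_H$. The two ingredients I would need to verify are (i) that each $\sigma_n$ is a.s.\ finite, which is guaranteed by the finite-index hypothesis (the induced chain $(HX_t)_t$ on the finite right-coset space $H \backslash G$ is a well-defined Markov chain under the right action of $G$, and is irreducible since $\mu$ is adapted, hence recurrent), and (ii) that $\sigma_n \to \infty$ almost surely, which is automatic from $\sigma_n \geq n$.

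To finish, I would fix $y \in H$ and note that $f|_H(y W_n) = f(y X_{\sigma_n})$. The hypothesis that $f$ converges along $\mu$-random walks says exactly that $(f(y X_t))_t$ converges almost surely; on the intersection of this full-measure event with $\set{\sigma_n \to \infty}$, the subsequence $(f(y X_{\sigma_n}))_n$ must converge to the same limit. Since $y \in H$ was arbitrary, $f|_H$ converges along $\mu_H$-random walks.

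I do not foresee a genuine obstacle in the argument. The only mildly nontrivial piece is the strong-Markov identification of the return chain with a $\mu_H$-walk, but this identification is already built into the way $\mu_H$ is defined in the cited works, so nothing new has to be proved here. Everything else is routine bookkeeping: an almost sure limit is preserved under passing to an almost surely infinite strictly increasing sequence of integer times.
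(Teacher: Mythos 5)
Your argument is correct and is essentially the paper's own proof: both realize the $\mu_H$-walk as the $\mu$-walk sampled at successive return times to $H$ and then pass to the subsequence, the only cosmetic difference being that you start the walk at the identity and translate by $y\in H$, while the paper starts it directly at $y$. Your extra remarks on the a.s.\ finiteness of the return times just make explicit what the paper states as well known.
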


\begin{proof}
Let $(X_t)_t$ be a $\mu$-random walk started at $X_0=y \in H$.
Let $\tau_0 = 0$ 
and let $\tau_{n+1} = \inf \{ t \geq \tau_n +1 \ : \ X_t \in H\}$ be the successive return times to $H$.
So $(Y_n : = X_{\tau_n})_n$ is a $\mu_H$-random walk started at $Y_0 = y$.

Since $(f(X_t))_t$ converges a.s., also $(f(Y_n))_n$ converges a.s.\ as a sub-sequence.
This holds for arbitrary $y \in H$ completing the proof.
\end{proof}

\begin{lem}
\label{lem: compact}
Let $G$ be a compact group and $\mu$ a courteous measure on $G$.

If $f:G \to \R$ is a continuous function that converges along random walks then $f$ is constant.
\end{lem}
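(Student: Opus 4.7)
The plan is to exploit compactness of $G$ together with the bi-invariance of Haar measure $\haar$: on a compact group $\haar \ast \nu = \haar$ for any probability measure $\nu$ on $G$, so $\haar$ is stationary for the $\mu$-random walk. This stationarity converts the pathwise convergence hypothesis into a pointwise algebraic invariance for $f$, from which constancy follows via adaptedness.

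First I note that $f$ is bounded, being continuous on the compact space $G$. Let $(X_t)_t$ be the $\mu$-random walk started at the identity, as in the definition of convergence along random walks. I introduce an independent starting point $Y_0 \sim \haar$, and set $Y_t := Y_0 X_t$; by right-invariance of Haar, $Y_t \sim \haar$ for every $t \geq 0$. The hypothesis gives a.s.\ convergence of $(f(yX_t))_t$ for every fixed $y \in G$, so Fubini yields a.s.\ convergence of $(f(Y_t))_t$ as well.

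Hence $f(Y_{t+1}) - f(Y_t) \to 0$ a.s., and by dominated convergence (using boundedness of $f$), $\mathbb{E}[(f(Y_{t+1}) - f(Y_t))^2] \to 0$. However, writing $Y_{t+1} = Y_t \xi_{t+1}$ with $\xi_{t+1} \sim \mu$ independent of $Y_t \sim \haar$, this expectation in fact does not depend on $t$ and equals the Dirichlet-type integral
\[ \Ee(f,f) := \int_G \int_G \bigl(f(xs) - f(x)\bigr)^2 \, d\haar(x) \, d\mu(s) . \]
So $\Ee(f,f) = 0$, which forces $f(xs) = f(x)$ for $(\haar \otimes \mu)$-a.e.\ $(x,s)$. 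Since $f$ is continuous and $\haar$ has full support, the equality extends to every $x \in G$ and every $s \in \supp(\mu)$.

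Finally, iterating this identity along products and invoking symmetry of $\mu$ together with continuity, $f(xs) = f(x)$ for every $x \in G$ and every $s$ in the closed subgroup of $G$ generated by $\supp(\mu)$. Adaptedness says that this subgroup is $G$ itself, so $f$ is right-translation invariant, hence constant. The main subtlety (and the only place where compactness is genuinely used) is the bi-invariance step, which makes $\haar$ simultaneously the chosen starting distribution of the walk and a stationary distribution; without it, the Dirichlet quantity would depend on $t$ and the argument would not close. I do not foresee any other real obstacle.
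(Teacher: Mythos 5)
Your proof is correct, but it follows a genuinely different route from the paper's. The paper argues by contradiction using a soft ergodic-theoretic fact: if $f$ is non-constant, continuity yields two disjoint open sets $U,V$ of positive Haar measure on which the values of $f$ are separated, and the ergodic theorem for the (adapted) walk on a compact group forces $(X_t)_t$ to visit both $U$ and $V$ infinitely often, so $(f(X_t))_t$ has two distinct accumulation points and cannot converge. You instead exploit stationarity of Haar measure quantitatively: starting the walk from $Y_0\sim\haar$, the expected squared increment $\E\bigl[(f(Y_{t+1})-f(Y_t))^2\bigr]$ equals the $t$-independent Dirichlet form $\int_G\int_G (f(xs)-f(x))^2\,d\haar(x)\,d\mu(s)$, which must vanish since a.s.\ convergence (via Fubini over the starting point) plus boundedness forces the increments to $0$ in $L^2$; continuity and full support of Haar then give $f(xs)=f(x)$ for all $x$ and all $s\in\supp(\mu)$, and since $\{s: f(xs)=f(x)\ \forall x\}$ is a closed subgroup, adaptedness finishes. (Symmetry of $\mu$ is not actually needed for this last step --- that subgroup is automatically closed under inversion.) Your argument avoids the ergodic theorem entirely, using only stationarity, dominated convergence and Fubini, and it produces the invariance structure of $f$ directly rather than a contradiction; the paper's argument is shorter given the ergodic theorem as a black box and makes the role of continuity (separating values on open sets) more transparent. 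Both uses of adaptedness are essential --- the paper's implicitly, through ergodicity of the walk with respect to Haar measure.
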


\begin{proof}
Assume for a contradiction that $f$ is non-constant.
Let $x \in G$ be such that $f(x) \neq f(1)$. $f$ is continuous, so we may choose 
two open neighborhoods $x \in U , 1 \in V$ 
such that 
$$ \sup_{z \in U} |f(x) - f(z)| < \tfrac12 | f(x) - f(1) | \qquad \textrm{and} \qquad 
\sup_{y \in V} |f(1)  - f(y) | < \tfrac12 | f(x) - f(1) | . $$
Specifically, 
$V \cap U = \emptyset$ and $U,V$ have positive Haar measure.
Also, for any $z \in U, y \in V$ we have $f(z) \neq f(y)$.

Now,
the ergodic theorem tells us that for any measurable subset $A \subset G$ we have that
$\tfrac1t \1{A}(X_t) \to \lambda(A)$ a.s.\ where $\lambda$ is the normalized Haar probability measure on $G$.
Thus, a.s.\ the sequence $(f(X_t)_t)$ contains an accumulation points in any open set of positive Haar measure,
contradicting convergence along random walks.
\end{proof}

\begin{lem}
\label{lem: co compact}
Let $G$ be an amenable CGLC group
and let $\mu$ be a courteous measure on $G$.
Let $f:G \to \R$ be a continuous function that converges along random walks.

Assume that there exists a co-compact normal subgroup $H \lhd G$ such that $H$
acts trivially on $f$.

Then, $f$ is constant.
\end{lem}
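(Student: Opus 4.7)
The plan is to reduce the problem to the compact case already handled in Lemma \ref{lem: compact} by passing to the quotient $G/H$.

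First, I would observe that since $H$ acts trivially on $f$, we have $f(h x) = f(x)$ for all $h \in H$ and $x \in G$, so $f$ is constant on left cosets of $H$. As $H \lhd G$ is normal, these coincide with right cosets, and $f$ descends to a continuous function $\bar f : G/H \to \R$ via $\bar f(\pi(x)) = f(x)$, where $\pi : G \to G/H$ is the quotient map. By co-compactness, $G/H$ is a compact group.

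Next, I would push the random walk forward to $G/H$. Let $\bar\mu = \pi_* \mu$ denote the pushforward; since homomorphic images of courteous measures are courteous (as used already in Section \ref{scn:from LC to Sd} right after Proposition \ref{prop: courteous motivation}), $\bar\mu$ is a courteous measure on the compact group $G/H$. If $(X_t)_t$ is a $\mu$-random walk started at $x \in G$, then $(\pi(X_t))_t$ is a $\bar\mu$-random walk started at $\pi(x)$, because $\pi(X_{t+1}) = \pi(X_t) \cdot \pi(X_t^{-1}X_{t+1})$ and the increments $\pi(X_t^{-1}X_{t+1})$ are i.i.d.\ $\bar\mu$. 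Any starting point of $G/H$ can be realized by choosing an appropriate lift in $G$.

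Now I would check that $\bar f$ converges along $\bar\mu$-random walks: for any lift $x$ of a point in $G/H$, the sequence $\bar f(\pi(x) \cdot \pi(X_t^{-1}X_{t+1}) \cdots ) = f(x X_t)$ converges almost surely by hypothesis on $f$. Applying Lemma \ref{lem: compact} to the continuous function $\bar f$ on the compact group $G/H$ with courteous measure $\bar\mu$, we conclude that $\bar f$ is constant, hence $f$ is constant.

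The only real content is the verification that courteousness and almost-sure convergence along random walks are preserved under the quotient map; both follow directly from the facts already recorded earlier in the paper, so there is no serious obstacle. The amenability hypothesis is not even used in this reduction; it enters only in subsequent arguments that handle the case where no such co-compact $H$ exists.
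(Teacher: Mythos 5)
Your argument is correct and is essentially the same as the paper's: both pass to the induced continuous function $\bar f$ on the compact quotient $G/H$, observe that the projected random walk is a random walk for the pushforward (courteous) measure and that $\bar f$ still converges along it, and then invoke Lemma \ref{lem: compact}. Your additional remarks on why the pushforward measure is courteous and why amenability is not needed here are accurate and consistent with the paper.
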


\begin{proof}
Since $H$ acts trivially on $f$, this induces a continuous function on the compact group $G/H$
via $\bar f(Hx) = f(x)$.
If we consider the projected random walk (\ie the process $(H X_t)_t$) 
on this compact group, then $\bar f$ converges along random walks
(because $f$ does).
Thus, by Lemma \ref{lem: compact} $\bar f$ is constant.
This implies that $f$ is constant as well.
\end{proof}

We require the notion of a type $\mathbf{S}$ action following \cite{BE95}. 

\begin{dfn}
Let $\rho:G \to GL(V)$ be an action of $G$ on a finite-dimensional real vector space $V$. We say that this action is of type $\mathbf{S}$ if there exists a compact subgroup $K$ of $GL(V)$, a continuous homomorphism $k:G \to K$ and a continuous homomorphism $a: G \to (0,\infty)$ such that 
$
\rho(g) = a(g) k(g)
$
for all $g \in G$. 
\end{dfn}

\begin{proof}[Proof of Theorem \ref{thm: convRW}]
We will denote $X_{t+1} = X_t U_{t+1}$ for $(U_t)_{t \geq 1}$ i.i.d.-$\mu$ random steps.

Let $V = \mathrm{span} (G.f)$ and assume that $\dim V = d < \infty$. 
Note that $\big( h(x X_t) \big)_t$ converges for all $h \in V$. 
Since functions in $V$ factor through the kernel of the $G$-action, we may assume that $G \leq GL(V)$. 

Under this assumption, $G$ is now an amenable linear group.  
A result by Guivarc'h \cite{Gui73} 
states that 
there exists a finite index normal subgroup $G'$ of $G$, 
for which there is a finite sequence $\{0\} = V_0 \subset V_1 \subset ... V_n = V \cong \R^d$ of 
$G'$-invariant linear subspaces of $V$ 
such that the action of $G'$ on each $V_{i+1} / V_i$ is of type $\mathbf{S}$. By 
Lemma \ref{lem: convRW FI}, we may, without loss of generality, pass to the finite index subgroup,
since we are only required to prove that $G'$ acts trivially on $f$, due to Lemma \ref{lem: co compact}.
So we assume that a sequence $\{0\} = V_0 \subset V_1 \subset ... V_n = V$
exists with respect to $G$. Specifically, by an appropriate choice of basis $B$ we have
\begin{align}
\label{eq: type S matrix form}
[x]_B =  
\begin{pmatrix}
a_1(x) k_1(x) & x_{12} 			&  \cdots   &	x_{1n}	\\
0			  & a_2(x)k_2(x)	&  \cdots	&	x_{2n}	\\  
\vdots		  & 				&  \ddots	&	\vdots  \\
0			  &	\cdots			&			&	a_n(x) k_n(x)
\end{pmatrix}
\end{align}
where 
$k_i:G \to K_i\subset \mathrm{GL}(\R^d)$ and $a_i :G \to (0,\infty)$ are homomorphisms
and $K_i$ is a compact subgroup of $\mathrm{GL}(\R^d)$. 

Let $H \lhd G$ be the kernel of the homomorphism 
$x \mapsto (k_1(x) , \ldots, k_n(x))$.
So $G/H$ is isomorphic to the compact group $k_1(G) \times \cdots \times k_n(G)$.
Also, for any $x \in H$ we have that $k_j(x) = I$.

{\bf Step I.}
First we show that $V_1$ is the space of constant functions
(so $G$ acts trivially on $V_1$).

For any $x \in G$ and $h \in V_1$ we have 
$$
[x.h]_B = [x]_B [h]_B = a_1(x)k_1(x)  [h]_B .
$$
This is a slight abuse of notation, since we regard 
$a_1(x) k_1(x)$ as acting on the whole space $\R^d$, by 
identifying
\begin{align}
\label{eqn:matrix}
 a_j(x) k_j(x) & = 
\begin{pmatrix}
I & 0 & 0 &  \cdots   & 0 \\
0 & \ddots	&  \cdots	& & 0  \\  
\vdots  & 	&  a_j(x) k_j(x)	& & \vdots  \\
\vdots & & & \ddots & \vdots \\
0  &	\cdots &	& 0 & I
\end{pmatrix}
\end{align}

Consider $\IP{ \cdot, \cdot}$, the inner product on $\R^d$. 
For any $y \in G$ the map $h \mapsto h(y)$ is a linear functional on $V$,
so by the Riesz  representation theorem there exists $v_y \in \R^d$ such that
$\IP{ [h]_B , v_y } = h(y)$ for all $h \in V$.

Let $h \in V_1$ be any function.
Then for any $x \in H$ and $y \in G$, since $k_1(x) = I$,
$$ h(x) = x^{-1} . h(y) = a_1(x)^{-1} \cdot \IP{ [h]_B , v_y } = a_1(x)^{-1} \cdot h(y) . $$
Thus, for any $x \in H$ we have $h(x^{-n}) = a_1(x)^{n} h(1)$.
Because we assumed that $h$ grows sub-exponentially, this implies that $a_1(x) = 1$ for all $x \in H$.
So $H$ acts trivially on any $h \in V_1$.
By Lemma \ref{lem: co compact} this implies that $V_1$ is the space of constant functions.

{\bf Step II.}
We now show that $H$ acts trivially on $V_2$. 
(If $d=1$ this step is redundant, since we have already shown that $V=V_1$ is the space of constant functions).

Let $h \in V_2$.
Let $\delta_1 \in \R^d$ be the vector with $1$ in the first coordinate and $0$ elsewhere.
Note that since $V_1$ is the space of constant functions,
$\delta_1 \perp v_y - v_1$ for all $y \in G$.

For all $x \in G$ we have
$$ [x.h]_B = a_2(x) \cdot k_2(x) [h]_B + \IP{ (0, x_{12} , \ldots, x_{1n}) , [h]_B } \cdot \delta_1 . $$
The important observation here is that the coefficient of $\delta_1$ above depends only on $x$ and 
not on the specific point of evaluation of the function $\delta_1$.
So if $x \in H$ then for any $y \in G$,
$$ h(xy) - h(x) = a_2(x)^{-1}  \cdot \IP{ [h]_B , v_y - v_1 } = a_2(x)^{-1} ( h(y) - h(1) ) . $$
This implies that for any $x \in H$,
\begin{align*}
h(x^{-n}) - h(1) & 
= \sum_{j=0}^{n-1} a_2(x)^j \cdot (h(x^{-1}) - h(1) ) 
= \tfrac{ a_2(x)^n - 1}{a_2(x) - 1} \cdot ( h(x^{-1} ) - h(1) ) .
\end{align*}
As before, since we assumed that $h$ has sub-exponential growth, this implies that 
$a_2(x) = 1$ for any $x \in H$, which is to say that $H$ acts trivially on any $h \in V_2$.

Thus, by Lemma \ref{lem: co compact} any $h \in V_2$ is constant, 
implying that $G$ acts trivially on $V_2$.

{\bf Conclusion.}
Since $V_2$ is the space of constant functions, it must be that actually $d=1$
and $V_1=V$ is the space of constant functions, and that originally in \eqref{eq: type S matrix form}
the matrices were all the identity matrix.
This shows that $G$ acts trivially on the orbit of $f$ and specifically on $f$.
\end{proof}

Following the statement of Theorem \ref{thm: convRW} we remarked that
in the case where $G$ is finitely generated this theorem holds without the 
sub-exponential growth assumption.
Since the proof is almost identical we only sketch the proof of this observation.

\begin{proof}[Sketch of proof]
As in the proof of Theorem \ref{thm: convRW} we arrive at 
a representation as in \eqref{eq: type S matrix form}.
Setting $H$ to be the co-compact subgroup which is the kernel of the map $x \mapsto (k_1(x) , \ldots, k_n(x))$,
we find that $H$ is of finite index (because the compact group $k_1(G) \times \cdots \times k_n(G)$
is actually finitely generated in this case, and thus finite).

Thus, by Lemma \ref{lem: convRW FI}, we may pass to
the subgroup $H$ instead of $G$.  
Then, the same reasoning as in Step I of the proof above gives that 
for any $h \in V_1$,
we have $h(X_t) = a_1(X_t)^{-1} \cdot h(1)$.
Thus, $\log \frac{ h(X_t) }{h(1) }$ is a symmetric random walk on the additive group $\R$.
Such a random walk can only converge if it is degenerate (see \cite{Durrett});
that is, if $h(X_t) = h(1)$ a.s.\ for all $t$.
Because $\mu$ is adapted this implies that $h$ is constant.

Once establishing that $V_1$ is the constant functions, as in 
Step II of the proof of Theorem \ref{thm: convRW}, we arrive at
\begin{align}
\label{eqn: h V2}
h(X_t y) - h(X_t) & = a_2(X_t)^{-1} \cdot (h(y) - h(1)) 
\end{align}
for any $y$ and any $t$, and for any $h \in V_2$.
Now, for a fixed $y$ there exist $n \in \N$ and $\alpha>0$ such that $\mu^n(y) > \alpha$.
Thus, as $t \to \infty$ for any $\eps>0$,
\begin{align*}
\alpha \cdot \Pr [ | h(X_t y) - h(X_t) | > \eps ] & \leq \Pr [ | h(X_{t+n}) - h(X_t) | > \eps ] \to 0 .
\end{align*}
That is, the left hand side of \eqref{eqn: h V2} converges to $0$ in probability.
However, as before, $( \log a_2(X_t) )_t$ is a symmetric random walk on $\R$,
implying that it can only converge if it is degenerate.
So it must be that $a_2 \equiv 1$ and we arrive at $h(xy) - h(x) = h(y)-h(1)$ for all $x,y \in H$.
This implies that $h - h(1)$ is a homomorphism from $H$ into the additive group $\R$.
Specifically, $(h(X_t) - h(1) )_t$ forms a symmetric random walk on $\R$,
and because this random walk must converge a.s., we obtain as before that $h$ is constant.
\end{proof}

\bibliographystyle{alpha}

\end{document}